\theoremstyle{plain}
\newtheorem{theorem}{Theorem}
\newtheorem*{theorem*}{Theorem}
\newtheorem{lemma}{Lemma}
\newtheorem*{lemma*}{Lemma}
\theoremstyle{definition}
\newtheorem{definition}{Definition}
\newtheorem*{definition*}{Definition}
\theoremstyle{remark}
\newtheorem{remark}{Remark}
\newtheorem*{remark*}{Remark}
\theoremstyle{example}
\newtheorem{example}{Example}
\newtheorem*{example*}{Example}
\theoremstyle{problem}
\newtheorem{problem}{Problem}
\newtheorem*{problem*}{Problem}
\theoremstyle{hypothesis}
\newtheorem{conjecture}{Conjecture}
\newtheorem*{conjecture*}{Conjecture}
\theoremstyle{corollary}
\newtheorem{corollary}{Corollary}
\newtheorem*{corollary*}{Corollary}
\begin{document}
\title[On one type of modelling certain numeral systems]{On one approach to modeling numeral systems (On one type of modelling certain numeral systems)}
%
 \subjclass[2010]{11K55, 26A27, 11J72, 11H71, 68P30, 
94B75, 11H99, 94B27}
\author{Symon Serbenyuk}
\address{ 45 Shchukina St.\\ 
Vinnytsia\\ 
21012 \\
  Ukraine}
\email{simon6@ukr.net}

\keywords{
$s$-adic representation;   coding information; pseudo-$s$-adic representation; Lebesgue measure; permutations of samples ordered with reiterations of $k$ numbers from $\{0,1, \dots, s-1\}$.}

\begin{abstract}

In this article,  the operator approach to modelling numeral systems  is introduced. This approach can be useful for coding information  and providing computer protection. Certain examples of such numeral systems are considered. In addition, the pseudo-binary representation is investigated. A description of further investigations of the author of this article is given.

\end{abstract}
\maketitle
2020 MSC: 11K55; 26A27;11J72; 11H71; 68P30;  94B75; 11H99; 94B27.


\section{Introduction}

As a straightforward consequence of the dynamic development in information technology and computing the necessity of safe encoding and information protection has arisen. Modeling  generalizations of well-known numeral systems is an important tool in this case.  For non-cracking the coding mechanism, new numeral systems should be difficult and should be systematically improved by a certain generalization or constructing a chain of sequential generalizations. In addition, various systems of real number encodings are effective tools for modeling and studying ``pathological" mathematical objects (for example,  the notion of  ``pathology" in mathematics is described in \cite{35}). Such mathematical objects (e.g., fractal sets, singular functions, non-differentiable, or nowhere monotonic functions, etc.) have the applied importance and the interdisciplinary character.  Examples of using or applying pathological mathematical objects in one or several fields of science can be easily founded in a number of literatures, even on Wikipedia (for example, see \cite{1, 5, 76, 11,   70, 18,  S21, 36, 37, 38}, etc.). As a drawback, it is possible that the applied importance of such mathematical objects can be justified only with time in one or another field of science. 

One can note that now in science there exists the tendency of modeling and studying numeral systems defined in terms of alternating expansions of real numbers under the condition that numeral systems defined in terms of corresponding positive expansions of real numbers are well-known.  Let us consider several examples:
\begin{itemize}

\item The notion of the $s$-adic numeral system with a fractional base   (more known   as the $\beta$- expansion) was introduced by A. R\'enyi in \cite{Renyi1957} in 1957. However, the nega-$s$-adic numeral system with a fractional base   (or $(-\beta)$-expansion) was introduced in \cite{IS2009} in 2009. Later, the last-mentioned numeral system was investigated by a number of researchers from Western Europe,  USA, and East Asia:  P. Ambro\v{z}, D. Dombek, Ch. Frougny,  A. Ch. Lai,  L. Liao, P. Loreti, Z. Mas\'akov\'a, K. Moore, E. Pelantov\'a, W. Steiner,  and other scientists. 

\item  In 1883, in the paper \cite{15}, the German mathematician J. L\"uroth introduced an expansion of a real number in the form of a special series (now these series are called  L\"uroth series). However, in 1990, S. Kalpazidou, A. Knopfmacher, J. Knopfmacher introduced alternating L\"uroth series in the paper \cite{10}. 
\end{itemize}

Such examples also exist for other expansions, e.g., for the case of positive and alternating Engel series~(for example, see \cite{Engel1, Engel 3} and references in \cite{Engel2, Engel4}).

The main purpose of this paper is to intoduce and to investigate  some  new numeral systems, geometry of which  is a generalization of geometries of  certain  numeral systems with positive or negative base. The main attention is given to certain two examples of numeral systems which are related to the binary and ternaty numeral systems by a certain map. It is proven that any number from $[0,1]$ can be be represented in terms of  these numeral systems, properties of cylinders defined in terms of  the representations and also some properties of the considered maps are  investigated.

The paper is organized as follows. In Section~\ref{section 2}, the fundamental notations and notions which are used throughout the paper, are described. The main definition and explanations of the considered approach for modeling numeral systems, are given in Section~\ref{section 2a}.
Section~\ref{section 3} is devoted to one example of the pseudo-ternary numeral system. Section~\ref{section 3a} includes investigations of the  the pseudo-binary numeral systems. Finally, Section~\ref{section 4} is devoted to  further investigations of the author of this paper.

\section{Preliminaries}
\label{section 2}
\begin{definition}{\cite{39}.}
``\emph{A numeral system}  is a writing system for expressing numbers; that is, a mathematical notation for representing numbers of a given set, using digits or other symbols in a consistent manner." 
\end{definition}

Let us denote by  $\mathbb N$  the set of all positive integers,  by $|\cdot |$ the Lebesgue measure of a set, and by $d(\cdot)$ the diameter of a set (i.e.,  $d(\cdot)=\sup(\cdot)-\inf(\cdot)$). 

Let $s>1$ be a fixed positive integer. It is well known that any number $x\in [0,1]$ can be represented by the following form
\begin{equation}
\label{eq: s-adic-1}
\sum^{\infty} _{n=1}{\frac{\alpha_n}{s^n}}\equiv \Delta^{ s} _{\alpha_1\alpha_2...\alpha_n...}=x,
\end{equation}
where $\alpha_n\in A_s \equiv\{0,1,\dots , s-1\}$.

\begin{definition}
 A representation of the form~\eqref{eq: s-adic-1} is called \emph{the $s$-adic representation} of $x$. The notation  $\alpha_{n+1}(x)\alpha_{n+2}(x)...$ is a non-fixed tuple of digits in the $s$-adic representation of a number~$x$. 
\end{definition}

Notice that in the case of the $s$-adic representation there exists countable  set of numbers having two different $s$-adic representations. These numbers have the form
$$
\Delta^s _{\alpha_1\alpha_2...\alpha_{n-1}\alpha_n(0)}=\Delta^s _{\alpha_1\alpha_2...\alpha_{n-1}[\alpha_n-1](s-1)},~~~\alpha_n\ne 0, 
$$
and  are called \emph{s-adic rational}. Symbols in parentheses mean the period in the representation of a number. The other numbers in $[0,1]$ are called \emph{$s$-adic irrational}.

\begin{definition}
Any number $x\in\left[-\frac{s}{s+1},\frac{1}{s+1}\right]$ can be represented by the following form that is  called \emph{the nega-$s$-adic representation}:
$$
x=\Delta^{-s} _{\alpha_1\alpha_2...\alpha_n...}\equiv \sum^{\infty} _{n=1}{\frac{\alpha_n}{(-s)^n}},
$$
where $\alpha_n\in A_s$.
\end{definition}
\begin{remark} The term ``nega" is used throughout the paper since corresponding encodings of real numbers  are numeral systems with a negative base.
\end{remark}

It is easy to see that the following relationship between the $s$-adic and nega-$s$-adic representations holds:
\begin{equation}
\label{eq: 1}
\Delta^{-s} _{\alpha_1\alpha_2...\alpha_n...}+\sum^{\infty} _{k=1}{\frac{s-1}{s^{2k-1}}}=\sum^{\infty} _{k=1}{\frac{\alpha_{2k}}{s^{2k}}}+\sum^{\infty} _{k=1}{\frac{s-1-\alpha_{2k-1}}{s^{2k-1}}}\equiv\Delta^{s} _{[s-1-\alpha_1]\alpha_2...[s-1-\alpha_{2k-1}]\alpha_{2k}...}.
\end{equation}

Let $\mathbb N_B$ be a fixed subset of positive integers and
$$
\rho_n=\begin{cases}
1&\text{if $n\in \mathbb N_B$}\\
2&\text{if  $n\notin \mathbb N_B$.}
\end{cases}
$$

 Let us consider \emph{ the quasi-nega-$s$-adic representation} described in \cite{S.Serbenyuk}:
\begin{equation}
\label{eq: 2}
x=\Delta^{(\pm s, \mathbb N_B)} _{\alpha_1\alpha_2...\alpha_n...}\equiv \sum^{\infty} _{n=1} {\frac{(-1)^{\rho_n}\alpha_n}{s^n}}.
\end{equation}
Here $\alpha_n\in A_s$ and $x\in[a^{'} _{0}, a^{''} _{0}]$, where $a^{''} _{0}=1+a^{'} _{0}$ and
$$
a^{'} _{0}=-\sum^{\infty} _{n=1}{\frac{s-1}{s^{b_n}}}.
$$

\begin{theorem}{\cite{S.Serbenyuk}.}
Any number $x\in[a^{'} _{0}, a^{''} _{0}]$ can be represented  in form \eqref{eq: 2}.
\end{theorem}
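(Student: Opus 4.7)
The plan is to reduce the existence of a quasi-nega-$s$-adic representation on $[a'_0,a''_0]$ to the classical surjectivity of the $s$-adic representation on $[0,1]$, exploiting the same ``flip a digit'' idea used in identity \eqref{eq: 1}. Concretely, I would exhibit an explicit bijection between candidate digit strings $(\alpha_n)$ and $s$-adic digit strings $(\gamma_n)$ that intertwines the two series.

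First, I would substitute $y:=x-a'_0$. Since $a''_0-a'_0=1$, the hypothesis $x\in[a'_0,a''_0]$ is equivalent to $y\in[0,1]$, so by the classical result recalled in the introduction there exist digits $\gamma_n\in A_s$ with
\[
y=\sum_{n=1}^{\infty}\frac{\gamma_n}{s^n}.
\]
Next, I would define
\[
\alpha_n=\begin{cases}\gamma_n,& n\notin\mathbb{N}_B,\\ s-1-\gamma_n,& n\in\mathbb{N}_B,\end{cases}
\]
which clearly lies in $A_s$ because $\gamma_n\in A_s$ and the map $t\mapsto s-1-t$ permutes $A_s$.

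Then the verification is a direct computation: splitting the series at $\mathbb{N}_B$ and using the definition of $a'_0$,
\[
\sum_{n=1}^{\infty}\frac{(-1)^{\rho_n}\alpha_n}{s^n}
=\sum_{n\notin\mathbb{N}_B}\frac{\gamma_n}{s^n}-\sum_{n\in\mathbb{N}_B}\frac{s-1-\gamma_n}{s^n}
=\sum_{n=1}^{\infty}\frac{\gamma_n}{s^n}-\sum_{n\in\mathbb{N}_B}\frac{s-1}{s^n}
=y+a'_0=x,
\]
which gives the desired representation \eqref{eq: 2}.

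I do not anticipate a genuine obstacle: once the interval is normalized to $[0,1]$ via the shift by $a'_0$, the rest is the classical $s$-adic expansion combined with the involution $\gamma\mapsto s-1-\gamma$ on the ``negative'' coordinates $n\in\mathbb{N}_B$. The only minor subtlety worth a sentence is that $\mathbb{N}_B$ may be finite or infinite, but in either case the rearrangement of the series is justified by absolute convergence (each subsum is dominated by $\sum_n (s-1)/s^n=1$), and non-uniqueness of the $s$-adic representation at $s$-adic rationals causes no issue since only existence is claimed.
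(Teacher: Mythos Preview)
Your argument is correct and is essentially the same as the paper's: the paper does not write out a formal proof of this theorem, but immediately after stating it records the identity \eqref{eq: 3}, which is precisely your digit-flip bijection $\alpha_n\leftrightarrow\gamma_n$ together with the shift by $-a'_0=\sum_{n}(s-1)/s^{b_n}$. You have simply made the deduction from \eqref{eq: 3} and the surjectivity of the $s$-adic expansion on $[0,1]$ explicit.
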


One can note that if we consider $\Delta^{(\pm s, \mathbb N_B)} _{\alpha_1\alpha_2...\alpha_n...}$, then $ \Delta^{s} _{\alpha^{'} _1\alpha^{'} _2...\alpha^{'} _n...}\in [0,1]$, where
$$
\alpha^{'} _n=\begin{cases}
s-1-\alpha_n&\text{if $n\in \mathbb N_B$}\\
\alpha_n&\text{if  $n\notin \mathbb N_B$.}
\end{cases}
$$
That is, 
\begin{equation}
\label{eq: 3}
\Delta^{(\pm s, \mathbb N_B)} _{\alpha_1\alpha_2...\alpha_n...}+\sum^{\infty} _{n=1}{\frac{s-1}{s^{b_n}}}\equiv \Delta^{s} _{\alpha^{'} _1\alpha^{'} _2...\alpha^{'} _n...}.
\end{equation}
Also, we can write
$$
\Delta^{(\pm s, \mathbb N_B)} _{\alpha_1\alpha_2...\alpha_n...}\equiv \Delta^{s} _{\theta(\alpha_1)\theta(\alpha_2)...\theta(\alpha_n)...}-\sum^{\infty} _{n=1}{\frac{s-1}{s^{b_n}}},
$$
where $\theta(\alpha_n)=\alpha^{'} _n$.

One can generalize this idea and model representations  related with the $s$-adic representation by certain number converters (converters of combinations of numbers). There exist numeral systems constructed by certain mixings points in known numeral systems. In the present article,  the operator approach to modeling numeral systems is introduced. It is shown that there exist real number representations, that are modified $s$-adic representations, in which relations between digits in the old and new representations are defined by certain operators $\theta$ of changes of number combinations. In other words,  new encodings are obtained by  certain changes of s-adic number combinations by operators $\theta$ in the $s$-adic representations  of numbers.

Let us consider the following operators (converters of  $k$-digit combinations):
$$
\theta_{k,i} \left(\alpha_{km+1},\alpha_{km+2},\dots ,\alpha_{(m+1)k}\right)=\left(\beta_{km+1},\beta_{km+2},\dots ,\beta_{(m+1)k}\right),
$$
where numbers $k, i$ are fixed for an operator $\theta_{k,i}$, $m=0,1,\dots$, and $\theta_{k,i} (\alpha_1,\alpha_2,\dots ,\alpha_k)$ is  some  bijective correspondence such that the
set
$$
A_s ^k=\underbrace{A_s \times A_s \times \ldots\times A_s}_{k}.
$$
is its domain of definition and range of values. Also, here $\alpha_j\in A_s$ for $j=1,2, \dots, k$ and  $i =0, 1, \dots , (s^k)!-1$ since the number of all operators (converters) $\theta_{k,i}$ for a fixed number $k$ is equal to the number of all permutations of elements from the set of all samples ordered with reiterations of $k$ numbers from $A_s$.
That is, each combination $(\alpha_1,\alpha_2,\dots ,\alpha_k)$ of $k$ s-adic digits  is assigned to the single
combination $\theta_{k,i} (\alpha_1,\alpha_2,\dots ,\alpha_k)$ of $k$ s-adic digits, i.e.,
$$
\left(\beta_{1},\beta_{2},\dots ,\beta_{k}\right)=\theta_{k,i} \left(\alpha_{1},\alpha_{2},\dots ,\alpha_{k}\right),
$$
$$
\left(\beta_{k+1},\beta_{k+2},\dots ,\beta_{2k}\right)=\theta_{k,i} \left(\alpha_{k+1},\alpha_{k+2},\dots ,\alpha_{2k}\right),
$$
$$
\dots \dots \dots \dots \dots \dots \dots 
$$
$$
\left(\beta_{km+1},\beta_{km+2},\dots ,\beta_{(m+1)k}\right)=\theta_{k,i} \left(\alpha_{km+1},\alpha_{km+2},\dots ,\alpha_{(m+1)k}\right),
$$
$$
\dots \dots \dots \dots \dots \dots \dots 
$$

Assume that for any fixed numbers $k\in\mathbb N$ and $2\le s\in\mathbb N$ the operators $\theta_{k,0}$ and $\theta_{k,(s^k)!-1}$ are the following:
$$
\theta_{k,0} \left(\alpha_{km+1},\alpha_{km+2},\dots ,\alpha_{(m+1)k}\right)=\left(\alpha_{km+1},\alpha_{km+2},\dots ,\alpha_{(m+1)k}\right),
$$
$$
\theta_{k,(s^k)!-1} \left(\alpha_{km+1},\alpha_{km+2},\dots ,\alpha_{(m+1)k}\right) =\left(s-1-\alpha_{km+1},s-1-\alpha_{km+2},\dots ,s-1-\alpha_{(m+1)k}\right),
$$
where $m=0,1,2,3, \dots$.

Let us consider certain examples of $\theta_{k,i}$.
\begin{example}
Suppose that $s=k=2$. Let us consider the operator $\theta_{2,i}$ defined by the following table
\begin{center}
\begin{tabular}{|c|c|c|c|c|c|}
\hline
$\alpha_{2m+1}\alpha_{2(m+1)}$ & $ 00 $& $01$ & $10$ & $11$\\
\hline
$\beta_{2m+1}\beta_{2(m+1)}$ & $10$ & $11$ & $00$ & $01$\\
\hline
\end{tabular}
\end{center}
Here $m=0,1,\dots$, $\beta_{2m+1}\beta_{2(m+1)}=\theta_{2,i}(\alpha_{2m+1},\alpha_{2(m+1)})$, and $i\in\{0,1,\dots , 23\}$. 

For example, a number having a binary representation of the form $\Delta^2 _{1110011001(11)}$ will have the following representation in terms of a new modified representation:
$$
\Delta^{2^{'}} _{0100110011(01)}=\theta_{2,i}\left(\Delta^2 _{1110011001(11)}\right).
$$
That is, 
$$
f_{\theta_{2,i}}: ~~~~~~~~~~~~~\Delta^2 _{1110011001(11)} \to \Delta^{2^{'}} _{0100110011(01)}.
$$
Note that $\Delta^{2^{'}} _{\beta_1\beta_2...\beta_n...}$ is a formal denotation of $\Delta^2 _{\theta_{2,i}(\alpha_1,\alpha_2)\theta_{2,i}(\alpha_3,\alpha_4)...}$.
\end{example}

\begin{example}
Suppose that $s=7, k=1,$ and $\beta_n=\theta_{1,i}(\alpha_n)$, where  $i\in\{0,1,\dots , 7! -1\}$ and 
\begin{center}
\begin{tabular}{|c|c|c|c|c|c|c|c|}
\hline
$\alpha_n$ & $ 0 $ & $1$ & $2$ & $3$ & $4$ & $5$ & $6$\\
\hline
$\beta_n$ & $ 3 $ & $5$ & $6$ & $4$ & $0$ & $2$ & $1$\\
\hline
\end{tabular}
\end{center}
It is easy to see that for an operator defined by the last-mentioned table and for all $n, t\in\mathbb N$, $\alpha_n\in \{0,1,\dots , 6\}$, the equality
$$
\underbrace{\theta_{1,i} \circ \theta_{1,i} \circ \cdots\circ \theta_{1,i}(\alpha_n)}_{12t}=\alpha_n
$$
holds  since the following system of equalities is true for this operator:
$$
\begin{cases}
{\theta_{1,i} \circ \theta_{1,i} \circ \theta_{1,i}(\alpha)}=\alpha&\text{whenever $\alpha\in\{0,3,4\}$}\\
{\theta_{1,i} \circ \theta_{1,i} \circ \theta_{1,i}\circ \theta_{1,i}(\alpha)}=\alpha&\text{whenever $\alpha\in\{1,2,5,6\}$.}
\end{cases}
$$
For example, 
$$
f_{\theta_{1,i}}: \Delta^7 _{3455142(1)}\to \Delta^{7^{'}} _{4022506(5)}.
$$
\end{example}

\section{General definitions}
\label{section 2a}

Let $s>1$ be a fixed positive integer and $(k_n)$ be a fixed sequence, where $k_n\in\mathbb N$. Let us consider the following matrix of operators
\begin{equation}
\label{eq: operator matrix}
\Theta_{s, (k_n)}\equiv
\begin{pmatrix}
\theta_{k_1,0}&  \theta_{k_1,1} &\ldots & \theta_{k_1,i_{j}}& \ldots& \ldots& \theta_{k_1,(s^{k_1})!-1}\\
\theta_{k_2,0}&  \theta_{k_2,1} &\ldots & \theta_{k_2,i_{j}}& \ldots&\ldots&\ldots &\theta_{k_2,(s^{k_2})!-1}\\
\vdots& \vdots &\ddots & \vdots&\ldots&\ldots&\ldots&\ldots&\ldots\\
\theta_{k_n,0}& \theta_{k_n,1} &\ldots & \theta_{k_n,i_{j}} &\ldots& \theta_{k_n,(s^{k_n})!-1}&\\
\dots  &   \dots &  \dots        & \dots &         \dots               &  \dots &  \dots &  \dots &  \dots           
\end{pmatrix}.
\end{equation}
Here $i_{j}=0, 1, \dots , (s^{k_n})!-1$. Clearly, it is possible that in this matrix the  number of all rows is equal to infinity (since $(k_n)$ can be an infinite sequence), and the numbers of elements in different rows can be different.

From matrix \eqref{eq: operator matrix}, a sequence $\left(\theta_{k_n,i_{k_n}}\right)$ of operators  is generated by selecting an  arbitrary operator $\theta_{k_n,i_{k_n}}$ from the row $n$ for all $n=1, 2, 3,  \dots $. That is,  $\theta_{k_n,i_{k_n}}$ is an operator from   the row $n$ and the column $i_{k_n}$.

So, we obtain a representation $\Delta^{\left(s,\left(\theta_{k_n,i_{k_n}}\right)\right)} _{\beta_1\beta_2...\beta_n...}$ related with the $s$-adic representation by the following rule:
\begin{equation}
\label{eq: 4}
\Delta^{\left(s,\left(\theta_{k_n,i_{k_n}}\right)\right)} _{\beta_1\beta_2...\beta_n...}\equiv \Delta^s _{\theta_{k_1,i_{k_1}}(\alpha_1,\dots , \alpha_{k_1})\theta_{k_2,i_{k_2}}(\alpha_{k_1+1},\dots , \alpha_{k_1+k_2})...\theta_{k_n,i_{k_n}}(\alpha_{k_1+k_2+\dots+k_{n-1}+1},\dots , \alpha_{k_1+\dots+k_n})...}.
\end{equation}
\begin{definition}
A representation $\Delta^{\left(s,\left(\theta_{k_n,i_{k_n}}\right)\right)} _{\beta_1\beta_2...\beta_n...}$ whose relation with the $s$-adic representation is described by equality \eqref{eq: 4}, is called \emph{the pseudo-s-adic representation of numbers from $[0,1]$}.
\end{definition}

\begin{example}
Suppose we have the representation of real numbers from $[0,1]$ by positive Cantor series, i.e.,
$$
x=\Delta^Q _{\varepsilon_1\varepsilon_2...\varepsilon_n...}\equiv\sum^{\infty} _{n=1}{\frac{\varepsilon_n}{q_1q_2\cdots q_n}}, 
$$
where $Q=(q_n)$ is a fixed sequence of positive integers, $q_n\ge 2$ for all $n\in\mathbb N$, and $\varepsilon_n\in\{0,1,\dots , q_n-~1\}$. In this case,  matrix \eqref{eq: operator matrix} is defined as:
\begin{equation*}
\Theta_{Q, (k_n)}\equiv
\begin{pmatrix}
\theta_{k_1,0}&  \theta_{k_1,1} &\ldots & \theta_{k_1,i_{j}}& \ldots& \ldots& \theta_{k_1, q_1q_2\cdots q_{k_1}-1}\\
\theta_{k_2,0}&  \theta_{k_2,1} &\ldots & \theta_{k_2,i_{j}}& \ldots&\ldots&\ldots &\theta_{k_2, q_{k_1+1}q_{k_1+2}\cdots q_{k_2}-1}\\
\vdots& \vdots &\ddots & \vdots&\ldots&\ldots&\ldots&\ldots&\ldots\\
\theta_{k_n,0}& \theta_{k_n,1} &\ldots & \theta_{k_n,i_{j}} &\ldots& \theta_{k_n,q_{k_{n-1}+1}\cdots q_{k_n}-1}&\\
\dots  &   \dots &  \dots        & \dots &         \dots               &  \dots &  \dots &  \dots &  \dots           
\end{pmatrix},
\end{equation*}
where $k_n\in\mathbb N$ and $i_{j}=0, 1, \dots , q_1q_2\cdots q_{k_n}-1$. Also, here $\theta_{k_n,0}$ is the identical operator   and
$$
\theta_{k_n,q_{k_{n-1}+1}\cdots q_{k_n}-1}\left(\varepsilon_{k_{n-1}+1},\varepsilon_{k_{n-1}+2},\dots , \varepsilon_{k_{n}}\right)=\left(q_{k_{n-1}+1}-1-\varepsilon_{k_{n-1}+1},\dots , q_{k_{n}}-1-\varepsilon_{k_{n}}\right).
$$
\end{example}

\section{One example of the pseudo-ternary representation}
\label{section 3}

Suppose   $s=3$ and  $k_n=1$ for all $n\in \mathbb N$. Then there exist $6=3!$ operators $\theta_{1,i_j}$ (see the following table).
\begin{center}
\begin{tabular}{|c|c|c|c|c|}
\hline
  $$ & $ \beta_{1,i_j}(0)$ &$  \beta_{1,i_j}(1) $ & $ \beta_{1,i_j}(2)$\\
\hline
$\alpha_n$ &$ $ 0 &$ 1 $ & $2$\\
\hline
$\theta_{1,0}  (\alpha_n) $ &$0$ & $1$ & $2$\\
\hline
$\theta_{1,1} (\alpha_n) $ &$0$ & $2$ & $1$\\
\hline
$\theta_{1,2} (\alpha_n) $ &$1$ & $0$ & $2$\\
\hline
$\theta_{1,3} (\alpha_n) $ &$1$ & $2$ & $0$\\
\hline
$\theta_{1,4} (\alpha_n) $ &$2$ & $0$ & $1$\\
\hline
$\theta_{1,5} (\alpha_n) $ &$2$ & $1$ & $0$\\
\hline
\end{tabular}
\end{center}

Let us consider the case of $\theta_{1,1}$. To simplify mathematical notations, let us denote $\theta_{1,1}(\alpha_n)$ as $\theta (\alpha_n)$. Let us investigate the pseudo-ternary representation (or \emph{the $3^{'}$-representation}) generated by the operator $\theta (\alpha_n)$, i.e., for $x \in [0, 1]$ we have
\begin{equation}
\label{eq: 3'}
 x=\Delta^{(3, \theta)} _{\beta_1\beta_2...\beta_n...}=\Delta^{3^{'}} _{\beta_1\beta_2...\beta_n...}\equiv\Delta^{3} _{\theta(\alpha_1)\theta(\alpha_2)...\theta(\alpha_n)...}\equiv\sum^{\infty} _{n=1}{\frac{\theta(\alpha_n)}{3^n}}.
\end{equation}
Note that 
$$
\Delta^{3} _{\alpha_1\alpha_2..\alpha_n...}\equiv \Delta^{3^{'}} _{\theta(\beta_1)\theta(\beta_2)...\theta(\beta_n)...}
$$
since $\theta\left(\theta(\alpha_n)\right)=\theta\left(\beta_n\right)=\alpha_n$ holds for any $n\in\mathbb N$.

 One can represent the operator $\theta$ by the Lagrange polynomial, i.e.,
$$
\theta(\alpha_n)=\frac{7\alpha_n-3\alpha^2 _n}{2}.
$$
Let us check the correctness of this representation. Really, in terms of the Lagrange polynomial, we obtain 
$$
\theta\left(\theta(\alpha_n)\right)\ne \frac{7\alpha_n-3\alpha^2 _n}{2}.
$$
So we shall not  use the Lagrange polynomial for representing such operators.

\begin{definition}In the case of the $3^{'}$-representation, we obtain
$$
\Delta^{3^{'}} _{\alpha_1\alpha_2...\alpha_{n-1}\alpha_n(0)}=\Delta^{3^{'}} _{\alpha_1\alpha_2...\alpha_{n-1}[\alpha_n-1](2)},~~~\alpha_n\ne 0.
$$
Such numbers are called \emph{$3^{'}$-rational} and the other numbers having the unique $3^{'}$-representation are called \emph{$3^{'}$-irrational}.
\end{definition}

\begin{lemma}
\label{lm: lemma 2-problems}
Any number $x\in[0,1]$ can be represented in form \eqref{eq: 3'}.

Every $3^{'}$-irrational number has the unique $3^{'}$-representation.
Every $3^{'}$-rational number has two different $3^{'}$-representations.
\end{lemma}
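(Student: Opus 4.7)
The plan is to lift everything from the standard ternary representation via the involution $\theta=\theta_{1,1}$. From the table one reads $\theta(0)=0$, $\theta(1)=2$, $\theta(2)=1$, hence $\theta\circ\theta=\mathrm{id}$ on $A_3=\{0,1,2\}$, a property already noted just before the lemma. Applying $\theta$ coordinate-wise therefore defines an involutive bijection $\Phi:A_3^{\mathbb N}\to A_3^{\mathbb N}$, $\Phi((\alpha_n))=(\theta(\alpha_n))$. The key observation, which follows directly from definition \eqref{eq: 3'}, is that a sequence $(\alpha_n)$ is a $3'$-representation of $x$ if and only if $(\theta(\alpha_n))$ is a ternary representation of $x$; thus $\Phi$ restricts to a bijection between the set of $3'$-representations of $x$ and the set of ternary representations of $x$, for every $x\in[0,1]$.

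Given this correspondence, existence is immediate: for $x\in[0,1]$, take the classical ternary expansion $x=\Delta^3_{\gamma_1\gamma_2\ldots}$ and put $\alpha_n:=\theta(\gamma_n)$; by involutivity $\theta(\alpha_n)=\gamma_n$, so $\Delta^{3'}_{\alpha_1\alpha_2\ldots}=x$. The counting part follows from the classical dichotomy for ternary expansions, namely that a ternary-irrational admits a unique ternary expansion while a ternary-rational $\Delta^3_{\gamma_1\ldots\gamma_{n-1}\gamma_n(0)}=\Delta^3_{\gamma_1\ldots\gamma_{n-1}[\gamma_n-1](2)}$ with $\gamma_n\ne 0$ admits exactly two. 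Transporting these sets through $\Phi$ transfers the same cardinalities to the corresponding families of $3'$-representations. To see that the exceptional sets match, note that a tail $(0)$ in a $3'$-representation pulls back under $\Phi$ to a tail $(\theta(0))=(0)$ in a ternary expansion and a tail $(2)$ pulls back to $(\theta(2))=(1)$, i.e.\ to the two canonical tails of a ternary rational; hence the $3'$-rationals are exactly the ternary rationals, and the $3'$-irrationals are exactly the ternary irrationals.

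The only step that demands genuine care is matching the precise syntactic form of the two $3'$-representations displayed just before the lemma statement. This is a finite bookkeeping computation: apply $\Phi$ to each of the two ternary expansions of a ternary rational $x$ and read off the resulting $3'$-digits using $\theta(0)=0$, $\theta(1)=2$, $\theta(2)=1$. The subtle point here is that the digit decrement $\gamma_n\mapsto\gamma_n-1$ on the ternary side does \emph{not} commute with $\theta$, so the induced change on the $n$-th digit of the $3'$-representation must be treated by splitting on whether the relevant ternary digit is $1$ or $2$; once this small table is written out, the claimed identity follows by direct substitution. Apart from this case analysis, the lemma is a pure translation through the involution $\Phi$, and no further ideas are needed.
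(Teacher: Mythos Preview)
Your core argument via the coordinate-wise involution $\Phi$ is sound and does prove the lemma: since $\Phi$ is a bijection on $A_3^{\mathbb N}$ and, by the defining equation \eqref{eq: 3'}, carries $3'$-representations of $x$ bijectively onto ternary representations of $x$, the existence and counting statements transfer directly from the classical ternary facts. However, you have misread the paper's notation, and this produces a genuine slip in your tail analysis. In the paper's convention (made explicit in the pseudo-binary section, where $(\beta_1,\ldots,\beta_{k_1})=\theta_{k_1,i_1}(\alpha_1,\ldots,\alpha_{k_1})$), the subscripts in $\Delta^{3'}_{\beta_1\beta_2\ldots}$ are the \emph{post}-$\theta$ digits, so $\Delta^{3'}_\beta$ and $\Delta^3_\beta$ take the same numerical value; under this reading the displayed $3'$-rational identity is literally the standard ternary one. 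Under your reading (subscripts as pre-$\theta$ digits) the $3'$-tail $(2)$ corresponds to the ternary tail $(\theta(2))=(1)$, but $(0)$ and $(1)$ are \emph{not} the two canonical tails of a ternary rational---those are $(0)$ and $(2)$---so the sentence ``i.e.\ to the two canonical tails of a ternary rational'' is false. Indeed, under your interpretation the paper's identity $\Delta^{3'}_{\ldots\alpha_n(0)}=\Delta^{3'}_{\ldots[\alpha_n-1](2)}$ would itself fail (take $n=1$, $\alpha_1=1$: the two sides are $\Delta^3_{2(0)}=2/3$ and $\Delta^3_{0(1)}=1/6$). This does not break your bijection argument for the lemma proper, but your final paragraph does not establish what it claims, and the ``case analysis'' you announce cannot rescue that particular syntactic form.

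The paper's own proof takes a different route: it introduces the map $f\bigl(\Delta^3_{\alpha_1\alpha_2\ldots}\bigr)=\Delta^3_{\theta(\alpha_1)\theta(\alpha_2)\ldots}$ and simply appeals to its properties established in the cited references (continuity at ternary irrationals, well-posedness of the definition), deducing the lemma from those. Your direct combinatorial transfer through $\Phi$ is more elementary and more self-contained than the paper's appeal to external results; once the notational point above is corrected, it is in fact the cleaner argument.
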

\begin{proof} Let us consider the function  
\begin{equation}
\label{eq: function f}
y=f(x)=f\left(\Delta^3 _{\alpha_1\alpha_2...\alpha_n...}\right)=\Delta^{3} _{\theta(\alpha_1)\theta(\alpha_2)...\theta(\alpha_n)...}. 
\end{equation}
Here we shall not consider numbers whose  ternary representation has the period $(2)$ (without the number $1$).  

This function and a class of functions containing this function were investigated in \cite{{S. Serbenyuk abstract 6}, {S. Serbenyuk preprint2},  Symon12(2), {S. Serbenyuk functions with complicated local structure 2013},{S. Serbenyuk abstract 7}, {S. Serbenyuk abstract 8}}. 

Let us note the main properties of this function.
\begin{theorem}{\cite{{S. Serbenyuk abstract 6}, Symon12(2), {S. Serbenyuk functions with complicated local structure 2013},  {S. Serbenyuk preprint2}}}.
The function $f$ has the following properties:
\begin{itemize}
\item $$
[0,1]\stackrel{f}{\rightarrow} \left([0,1] \setminus \{\Delta^3 _{\alpha_1\alpha_2...\alpha_n111...}\}\right) \cup\left\{\frac{1}{2}\right\};
$$
\item the point $x_0=0$ is the unique invariant point of the function  $f$;

\item $f$ is continuous at ternary-irrational points, and ternary-rational points are points of discontinuity of the function; furthermore, a ternary-rational point $x_0=\Delta^3 _{\alpha_1\alpha_2...\alpha_n000...} $ is a  point of discontinuity $\frac{1}{2\cdot 3^{n-1}}$ whenever $\alpha_n=1$, and is a point of discontinuity $\left(-\frac{1}{2\cdot3^{n-1}}\right)$ whenever $\alpha_n=2$;

\item  the function $f$ is not monotonic on the domain of definition;

\item the function $f$ is non-differentiable;

\item the Hausdorff-Besicovitch dimension of the graph of $f$  is equal to 1;

\item the Lebesgue integral of the function $f$   is equal to $\frac{1}{2}$.
\end{itemize}
\end{theorem}

Our statement follows from properties of the function and the well-posedness of the definition of this function.
\end{proof}

\begin{definition}
A set of the form
$$
\Delta^{3^{'}} _{c_1c_2...c_n}\equiv\left\{x: x=\Delta^{3^{'}} _{c_1c_2...c_n\beta_{n+1}\beta_{n+2}\beta_{n+3}...}, \beta_t\in\{0,1,2\}, t>n \right\},
$$
where $c_1,c_2,\dots , c_n$ is an
ordered tuple of integers such that $c_j\in\{0,1,2\}$ for $j=\overline{1,n}$, is called \emph{a cylinder $\Delta^{3^{'}} _{c_1c_2...c_n}$ of rank $n$ with base $c_1c_2\ldots c_n$}.
 \end{definition}
\begin{lemma}
\label{lm: cylinder's properties}
Cylinders $\Delta^{3^{'}} _{c_1c_2...c_n}$ have the following properties:
\begin{enumerate}
\item A cylinder $\Delta^{3^{'}} _{c_1c_2...c_n}$ is a closed interval and
$$
\Delta^{3^{'}} _{c_1c_2...c_n}=\left[\Delta^{3^{'}} _{c_1c_2...c_n(0)},\Delta^{3^{'}} _{c_1c_2...c_n(2)}\right].
$$
\item 
$$
\left|\Delta^{3^{'}} _{c_1c_2...c_n}\right|=\frac{1}{3^n}.
$$
\item
$$
\Delta^{3^{'}} _{c_1c_2...c_nc}\subset \Delta^{3^{'}} _{c_1c_2...c_n}.
$$
\item
$$
\Delta^{3^{'}} _{c_1c_2...c_n}=\bigcup^2 _{c=0}{\Delta^{3^{'}} _{c_1c_2...c_nc}}.
$$
\item Cylinders $\Delta^{3^{'}} _{c_1c_2...c_n}$ are  left-to-right situated.
\end{enumerate}
\end{lemma}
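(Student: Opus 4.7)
My plan is to reduce every assertion to a direct computation with geometric series, using the identification from equation \eqref{eq: 3'} and the preceding lemma that $\Delta^{3'}_{\beta_1\beta_2\ldots}=\sum_{n\ge 1}\beta_n 3^{-n}$ and that every $x\in[0,1]$ admits such an expansion. Under this identification the $3'$-cylinders coincide with the classical ternary cylinders, so each of the five claims should follow from elementary tail estimates.

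First, I would compute the extreme endpoints
\[
\Delta^{3'}_{c_1\ldots c_n(0)}=\sum_{k=1}^n \frac{c_k}{3^k},\qquad \Delta^{3'}_{c_1\ldots c_n(2)}=\sum_{k=1}^n \frac{c_k}{3^k}+\sum_{k>n}\frac{2}{3^k}=\sum_{k=1}^n \frac{c_k}{3^k}+\frac{1}{3^n},
\]
from which property~(2) is immediate. For one direction of~(1), any $x=\Delta^{3'}_{c_1\ldots c_n\beta_{n+1}\beta_{n+2}\ldots}$ has tail $\sum_{k>n}\beta_k 3^{-k}\in[0,3^{-n}]$, placing $x$ in the claimed closed interval. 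For the converse, I would apply the preceding lemma to the rescaled tail $3^n\bigl(x-\sum_{k=1}^n c_k 3^{-k}\bigr)\in[0,1]$ to obtain digits $\beta_{n+1},\beta_{n+2},\ldots$ realizing $x$ as a $3'$-expansion with prefix $c_1\ldots c_n$.

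Properties~(3) and~(4) are then formal: fixing a further digit $c$ restricts the set of admissible tails (giving containment), while the union over $c\in\{0,1,2\}$ exhausts all possible $(n+1)$st digits. Property~(5) reduces to the identity
\[
\Delta^{3'}_{c_1\ldots c_n c(2)}=\sum_{k=1}^n \frac{c_k}{3^k}+\frac{c+1}{3^{n+1}}=\Delta^{3'}_{c_1\ldots c_n (c+1)(0)}\qquad (c=0,1),
\]
showing that consecutive subcylinders share only their common boundary point and are therefore ordered left-to-right by the added digit.

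The only nontrivial step is the converse half of~(1), which requires invoking the preceding lemma for the rescaled tail in order to manufacture a $3'$-expansion from a given real number in the closed interval; every other step is routine bookkeeping with geometric series.
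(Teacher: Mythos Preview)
Your argument is correct. The key observation you exploit---that by \eqref{eq: 3'} the numerical value of $\Delta^{3'}_{\beta_1\beta_2\ldots}$ is simply $\sum_{n\ge 1}\beta_n 3^{-n}$, so the $3'$-cylinder $\Delta^{3'}_{c_1\ldots c_n}$ coincides as a subset of $[0,1]$ with the ordinary ternary cylinder $\Delta^{3}_{c_1\ldots c_n}$---immediately reduces all five properties to the familiar ternary case, and your tail estimates and endpoint identities handle those cleanly. The appeal to Lemma~1 for the converse inclusion in property~(1) is legitimate (and could equally well be replaced by the elementary fact that every real in $[0,1]$ has a ternary expansion).

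The paper's proof is organized differently: it routes everything through the map $f\colon \Delta^3_{\alpha_1\alpha_2\ldots}\mapsto \Delta^3_{\theta(\alpha_1)\theta(\alpha_2)\ldots}$, realizing $\Delta^{3'}_{c_1\ldots c_n}$ as the image $f(\Delta^3_{b_1\ldots b_n})$ with $c_j=\theta(b_j)$, and then tracks how $f$ treats infima and suprema of cylinders (noting, for instance, that $f(\sup\Delta^3_{b_1\ldots b_n})\neq \sup\Delta^{3'}_{c_1\ldots c_n}$). Only after this does it fall back on the same direct segment computation you give. Your approach is shorter and more transparent for this lemma taken in isolation; the paper's detour through $f$ is not needed here but sets up machinery (the cylinder-to-cylinder correspondence under $f$) that is reused in the subsequent results on measure preservation, so the extra work is amortized later.
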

\begin{proof}
Let us consider  map~\eqref{eq: function f} (i.e., $\theta(\alpha)=\theta_{1,1}(\alpha)$):
$$
f\left(\Delta^3 _{\alpha_1\alpha_2...\alpha_n...}\right)=\Delta^3 _{\theta(\alpha_1)\theta(\alpha_2)...\theta(\alpha_n)...}.
$$
\begin{enumerate} 
\item It is known that a cylinder $\Delta^{3} _{b_1b_2...b_n}$ is a closed interval and $f$ (see \cite{{S. Serbenyuk functions with complicated local structure 2013},   {S. Serbenyuk preprint2}}) is continuous at ternary irrational  points, and
ternary rational  points are points of discontinuity of this function.

Let us prove that a cylinder $\Delta^{3^{'}} _{c_1c_2...c_n}$ is a closed interval. 

Suppose that $x\in \Delta^{3^{'}} _{c_1c_2...c_n}$. That is,
$$
x=\sum^{n} _{j=1}{\frac{c_j}{3^j}}+\frac{1}{3^n}\sum^{\infty} _{l=1}{\frac{\beta_{n+l}}{3^l}}, ~~~\beta_{n+l}\in\{0,1,2\}.
$$
Whence
$$
x^{'}=\sum^{n} _{j=1}{\frac{c_j}{3^j}}\le x\le \sum^{n} _{j=1}{\frac{c_j}{3^j}}+\frac{1}{3^n}=x^{''}.
$$
So $x\in\left[x^{'}, x^{''}\right]\supseteq\Delta^{3^{'}} _{c_1c_2...c_n}$. Since
$$
x^{'}=\sum^{n} _{j=1}{\frac{c_j}{3^j}}+\frac{1}{3^n}\inf \sum^{\infty} _{l=1}{\frac{\beta_{n+l}}{3^l}}
$$
and
$$
x^{''}=\sum^{n} _{j=1}{\frac{c_j}{3^j}}+\frac{1}{3^n}\sup \sum^{\infty} _{l=1}{\frac{\beta_{n+l}}{3^l}},
$$
we obtain $x, x^{'}, x^{''}\in \Delta^{3^{'}} _{c_1c_2...c_n}$.
Let us prove that conditions $f(x_0)\in f\left(\Delta^{3} _{b_1b_2...b_n}\right)$ and $f\left(\Delta^{3} _{b_1b_2...b_nb}\right)\subset f\left(\Delta^{3} _{b_1b_2...b_n}\right)$ hold for any $x_0\in \Delta^{3} _{b_1b_2...b_n}$.

\item It is easy to see that
\begin{equation}
\label{eq: 8}
\begin{split}
f(x_0)&=f\left(\Delta^{3} _{b_1b_2...b_n\alpha_{n+1}(x_0)\alpha_{n+2}(x_0)...}\right)\\
&=\Delta^{3} _{\theta(b_1)\theta(b_2)...\theta(b_n)\theta(\alpha_{n+1})\theta(\alpha_{n+2})...}\\
&=\Delta^{3^{'}} _{c_1c_2...c_n\beta_{n+1}\beta_{n+2}...}\in \Delta^{3^{'}} _{c_1c_2...c_n}.
\end{split}
\end{equation}
Here  $\theta(b_j)=c_j$ for all $j=1, 2, \dots , n$ and $ c = \theta (b)$. Note that
$$
f\left(\inf \Delta^{3} _{b_1b_2...b_n}\right)=\inf{\Delta^{3^{'}} _{c_1c_2...c_n}}=\Delta^{3^{'}} _{c_1c_2...c_n(0)}.
$$
However,
$$
f\left(\sup \Delta^{3} _{b_1b_2...b_n}\right)\ne f\left(\Delta^{3} _{b_1b_2...b_n(1)}\right)= \sup{\Delta^{3^{'}} _{c_1c_2...c_n}}=\Delta^{3^{'}} _{c_1c_2...c_n(2)}.
$$

\item From relationship \eqref{eq: 8} and the following equalities
$$
f\left(\inf \Delta^{3} _{b_1b_2...b_nb}\right)=\Delta^{3^{'}} _{c_1c_2...c_nc(0)}\in \Delta^{3^{'}} _{c_1c_2...c_nc},
$$
$$
f\left(\sup \Delta^{3} _{b_1b_2...b_nb}\right)=\Delta^{3^{'}} _{c_1c_2...c_nc(1)}\in \Delta^{3^{'}} _{c_1c_2...c_nc},
$$
where
$$
f\left(\inf \Delta^{3} _{b_1b_2...b_nb}\right)=\inf{\Delta^{3^{'}} _{c_1c_2...c_nc}}
$$
and
$$
f\left(\sup \Delta^{3} _{b_1b_2...b_nb}\right)<\sup{\Delta^{3^{'}} _{c_1c_2...c_nc}},
$$
it follows that
$$
\inf{\Delta^{3^{'}} _{c_1c_2...c_nc}}\ge \inf{\Delta^{3^{'}} _{c_1c_2...c_n}},
$$
$$
\sup{\Delta^{3^{'}} _{c_1c_2...c_nc}}\le \sup{\Delta^{3^{'}} _{c_1c_2...c_n}}.
$$

\item So,
$$
\Delta^{3^{'}} _{c_1c_2...c_nc}\subset \Delta^{3^{'}} _{c_1c_2...c_n}=\bigcup^2 _{j=1}{\Delta^{3^{'}} _{c_1c_2...c_nj}}.
$$

\item In order to prove that the 5th property is true we observe that
$$
\inf\Delta^{3^{'}} _{c_1c_2...c_{n-1}2}-\sup\Delta^{3^{'}} _{c_1c_2...c_{n-1}1}=\Delta^{3^{'}} _{c_1c_2...c_{n-1}2(0)}-\Delta^{3^{'}} _{c_1c_2...c_{n-1}1(2)}=0,
$$
$$
\inf\Delta^{3^{'}} _{c_1c_2...c_{n-1}1}-\sup\Delta^{3^{'}} _{c_1c_2...c_{n-1}0}=\Delta^{3^{'}} _{c_1c_2...c_{n-1}1(0)}-\Delta^{3^{'}} _{c_1c_2...c_{n-1}0(2)}=0.
$$
\end{enumerate}
\end{proof}

\begin{lemma}
The map $f$ does not preserve the distance between points.
\end{lemma}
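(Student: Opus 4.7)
The statement is the negation of an isometry property, so by far the most efficient route is to exhibit two concrete points whose mutual distance is changed by $f$. The plan is to pick a pair of ternary-irrational points (to sidestep the ambiguity at ternary-rational points, where $f$ is discontinuous) for which the action of $\theta$ produces a visible stretching.

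A convenient choice is $x=\Delta^{3}_{(1)}$ and $y=\Delta^{3}_{(0)}$. Summing the geometric series gives $x=\tfrac12$ and $y=0$, so $|x-y|=\tfrac12$. Since $\theta(0)=0$ and $\theta(1)=2$, the definition of $f$ in \eqref{eq: function f} yields $f(x)=\Delta^{3}_{(2)}=1$ and $f(y)=\Delta^{3}_{(0)}=0$. Hence $|f(x)-f(y)|=1\ne \tfrac12=|x-y|$, and the map $f$ is not an isometry.

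The argument requires essentially no nontrivial machinery, so there is no real obstacle; the only point that deserves a brief justification in the write-up is that both $0$ and $\tfrac12$ are ternary-irrational in the sense of the paper (the periodic expansion $(1)$ is the unique ternary representation of $\tfrac12$, and $(0)$ is the unique one of $0$), so the values $f(x)$ and $f(y)$ are unambiguously defined and the counterexample is rigorous. If one preferred, one could equally well compare $\Delta^{3}_{1(0)2(0)\ldots}$-type pairs or any pair on which $\theta$ acts nontrivially infinitely often; the pair above is simply the shortest.
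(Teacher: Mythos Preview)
Your proof is correct and follows the same strategy as the paper: exhibit an explicit pair of points whose distance is not preserved. The paper instead takes $x_1=\Delta^{3}_{1(0)}=\tfrac13$ and $x_2=\Delta^{3}_{11(0)}=\tfrac49$, obtaining $|f(x_2)-f(x_1)|=\tfrac29\ne\tfrac19=|x_2-x_1|$, and explicitly ties the phenomenon to the jump discontinuities of $f$ at ternary-rational points. Your choice of the ternary-irrational pair $0,\tfrac12$ is arguably cleaner, since it sidesteps any need to invoke a convention for the value of $f$ at points with two ternary representations; the paper's pair, by contrast, consists of ternary-rational points and implicitly relies on the ``no period $(2)$'' convention for $f$ to be well defined there.
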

\begin{proof}
Let us find points $x_1, x_2\in[0,1]$ such that  $(f(x_2)-f(x_1)\ne x_2-x_1$. The statement follows from the existence of jump discontinuities of $f$. Indeed, for example, if we consider $x_1=\Delta^3 _{1(0)}$ and $x_2=\Delta^3 _{11(0)}$, we have
$$
f(x_2)-f(x_1)=\Delta^3 _{22(0)}-\Delta^3 _{2(0)}=\frac 2 9\ne x_2-x_1=\frac 1 9.
$$

In the next section, this statement for a general case of the pseudo-binary representation will be proven in  detail. 
\end{proof}

\begin{theorem} The map
$$
f: \Delta^3 _{\alpha_1\alpha_2...\alpha_n...} \to\Delta^3 _{\theta(\alpha_1)\theta(\alpha_2)...\theta(\alpha_n)...}
$$
preserves the Lebesgue measure of an arbitrary interval (segment).
\end{theorem}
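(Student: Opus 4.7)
The plan is to reduce the statement to the fact, already in Lemma \ref{lm: cylinder's properties}, that $f$ sends each rank-$n$ ternary cylinder to a set of Lebesgue measure $3^{-n}$, and then to extend from cylinders to arbitrary intervals by a standard measure-theoretic argument.

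First I would verify that $f$ acts on the rank-$n$ ternary cylinders as a measure-preserving permutation. For $x=\Delta^3_{b_1\ldots b_n\alpha_{n+1}\alpha_{n+2}\ldots}$, the definition of $f$ gives
\[
f(x)=\Delta^3_{\theta(b_1)\ldots\theta(b_n)\theta(\alpha_{n+1})\ldots}=\Delta^{3'}_{b_1\ldots b_n\alpha_{n+1}\alpha_{n+2}\ldots},
\]
so $f\bigl(\Delta^3_{b_1\ldots b_n}\bigr)=\Delta^{3'}_{b_1\ldots b_n}$, which by Lemma \ref{lm: cylinder's properties} is a closed interval of Lebesgue measure $3^{-n}$. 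Since $\theta$ is an involution (it swaps $1$ and $2$ and fixes $0$), the induced map on tuples $(b_1,\ldots,b_n)\mapsto(\theta(b_1),\ldots,\theta(b_n))$ is a bijection of $\{0,1,2\}^n$, so $f$ permutes the $3^n$ rank-$n$ ternary cylinders while preserving their lengths.

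To pass from cylinders to arbitrary intervals, I would invoke a standard $\pi$-system argument. The map $f$ is Borel measurable (it is defined digitwise), so the push-forward $\mu(A):=\lambda(f^{-1}(A))$ is a Borel probability measure on $[0,1]$; by the previous step it agrees with $\lambda$ on every rank-$n$ cylinder, and these cylinders generate the Borel $\sigma$-algebra, so $\mu=\lambda$ on all Borel sets. Because $\theta\circ\theta=\mathrm{id}$ gives $f\circ f=\mathrm{id}$ off the countable set of ternary rationals, $f^{-1}=f$ almost everywhere, whence $\lambda(f(I))=\mu(I)=\lambda(I)$ for every interval $I\subseteq[0,1]$. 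The main technical nuisance is bookkeeping at ternary rationals, where $f$ has jump discontinuities, one number carries two ternary representations that may assign different values to $f$, and the sets $\Delta^3_{b_1\ldots b_n}$ and $\Delta^{3'}_{b_1\ldots b_n}$ coincide only up to their common boundary points; all such exceptions are countable, hence Lebesgue-null, and do not affect the claim.
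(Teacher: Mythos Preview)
Your argument is correct, and it takes a somewhat different route from the paper's own proof. Both proofs start from Lemma~\ref{lm: cylinder's properties}: $f$ sends each rank-$n$ ternary cylinder to a closed interval of length $3^{-n}$. From there the paper argues by elementary inner and outer approximation: an arbitrary segment $[a',a'']$ is sandwiched between finite unions of cylinders $\bigcup_m\Delta^3_{b_1\ldots b_m}\subseteq[a',a'']\subseteq\bigcup_k\Delta^3_{b_1\ldots b_k}$ whose total lengths converge to $\lambda([a',a''])$, and since $\lambda\bigl(f(\Delta^3_{b_1\ldots b_n})\bigr)=\lambda(\Delta^3_{b_1\ldots b_n})$ for every cylinder, the same squeeze holds for the image. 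You instead pass through the push-forward measure $\mu=\lambda\circ f^{-1}$, identify $\mu=\lambda$ via a $\pi$-system argument, and then use the involution $f\circ f=\mathrm{id}$ (off the countable set of ternary rationals) to turn the preimage statement into the image statement $\lambda(f(I))=\lambda(I)$.

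Your approach is cleaner and immediately yields the stronger conclusion that $f$ preserves the Lebesgue measure of every Borel set, not just intervals; it also makes transparent why the ternary-rational ambiguities are harmless. The paper's approximation argument is more elementary (it avoids the $\pi$--$\lambda$ theorem and the involution step) but is a bit sketchy as written, and is tailored to intervals. Either route is adequate for the stated theorem.
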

\begin{proof}
Let a segment $[a^{'},a^{''}]\subset [0,1]$ be a certain cylinder $\Delta^{3} _{b_1b_2...b_n}$. Then from Lemma \ref{lm: cylinder's properties} it follows that the Lebesgue measure of the image and the preimage are equal:
$$
\left|\Delta^3 _{b_1b_2...b_n}\right|=\left|\Delta^{3^{'}} _{c_1c_2...c_n}\right|=\frac{1}{3^n}.
$$

Let $[a^{'},a^{''}]\subset [0,1]$ be an interval that is not a certain cylinder $\Delta^{3} _{b_1b_2...b_n}$. Then
there exists $\varepsilon$-covering of this segment by cylinders  $\Delta^{3} _{b_1b_2...b_k}$ of rank $k$ such that
$$
[a^{'},a^{''}]\subseteq\bigcup_k \Delta^{3} _{b_1b_2...b_k}
$$
and
$$
\lim_{k\to\infty}{\left|\bigcup_k \Delta^{3} _{b_1b_2...b_k}\right|}=\left|[a^{'},a^{''}]\right|.
$$
Also, there exists $\varepsilon$-covering of this segment by cylinders  $\Delta^{3} _{b_1b_2...b_m}$ of rank $m$ such that
$$
[a^{'},a^{''}]\supseteq\bigcup_m \Delta^{3} _{b_1b_2...b_m}
$$
and
$$
\lim_{m\to\infty}{\left|\bigcup_m \Delta^{3} _{b_1b_2...b_m}\right|}=\left|[a^{'},a^{''}]\right|.
$$
Here $\varepsilon$ is an arbitrary small positive number. That is,
$$
\left|\bigcup_m \Delta^{3} _{b_1b_2...b_m}\right|+\varepsilon\le\left|[a^{'},a^{''}]\right|\le\left|\bigcup_k \Delta^{3} _{b_1b_2...b_k}\right|-\varepsilon.
$$
Since,
$$
\left|\bigcup_k \Delta^{3} _{b_1b_2...b_k}\right|=\sum_k{\left| \Delta^{3} _{b_1b_2...b_k}\right|},~~~\left|\bigcup_m \Delta^{3} _{b_1b_2...b_m}\right|=\sum_m{\left| \Delta^{3} _{b_1b_2...b_m}\right|},
$$
we have
$$
\lim_{m\to\infty}{\sum_m{\left| \Delta^{3} _{b_1b_2...b_m}\right|}}=\left|[a^{'},a^{''}]\right|=\lim_{k\to\infty}{\sum_k{\left| \Delta^{3} _{b_1b_2...b_k}\right|}}.
$$
From Lemma  \ref{lm: cylinder's properties} the condition
$$
\left|\Delta^{3} _{b_1b_2...b_n}\right|=\left|f\left(\Delta^{3} _{b_1b_2...b_n}\right)\right|
$$
 holds for any integer  $n \ge 1$, we have 
$$
\left|[a^{'},a^{''}]\right|=\lim_{m\to\infty}{\sum_m{\left| f\left(\Delta^{3} _{b_1b_2...b_m}\right)\right|}}=\lim_{k\to\infty}{\sum_k{\left|f\left(\Delta^{3} _{b_1b_2...b_k}\right|\right)}}=\left|f\left([a^{'},a^{''}]\right)\right|.
$$
\end{proof}

\section{The pseudo-binary representations}
\label{section 3a} 

Note that the case when $s=2$ for the  pseudo-$s$-adic representation is interesting for consideration since this representation (the pseudo-binary representation) can be used in computer science by analogy with the classical binary representation. It can be useful for coding  information, protection of information, providing computer protection, etc. Certain theoretic aspects of such applications will be discussed in the further papers of the author of the present article.

Let us remark that in our case there exist only two one-digit converters (operators), i.e.,
$$
\theta_{1,0}(\alpha_n)=\theta_0(\alpha_n)=\begin{cases}
0&\text{if $\alpha_n=0$}\\
1&\text{if $\alpha_n=1$}
\end{cases}
$$
and
$$
\theta_{1,1}(\alpha_n)=\theta_1(\alpha_n)=\begin{cases}
1&\text{if $\alpha_n=0$}\\
0&\text{if $\alpha_n=1$}.
\end{cases}
$$
In other words, $\theta_0(\alpha_n)=\alpha_n$ and $\theta_1(\alpha_n)=1-\alpha_n$.

Let us denote
$$
\theta_{k_n,0} (\alpha_{k_1+\dots +k_{n-1}+1},\dots , \alpha_{k_1+\dots +k_n})=(\alpha_{k_1+\dots +k_{n-1}+1},\dots , \alpha_{k_1+\dots +k_n})
$$
and
$$
\theta_{k_n,(2^{k_n})!-1} (\alpha_{k_1+\dots +k_{n-1}+1},\dots , \alpha_{k_1+\dots +k_n})=(1-\alpha_{k_1+\dots +k_{n-1}+1},\dots , 1-\alpha_{k_1+\dots +k_n}),
$$
where $k_{n-1}=0$ for $n=1$.

Let $(k_n)$ be a fixed sequence of positive integers. Then we obtain the matrix
\begin{equation}
\label{eq: operator matrix2}
\Theta_{2, (k_n)}=
\begin{pmatrix}
\theta_{k_1,0}&  \theta_{k_1,1} &\ldots & \theta_{k_1,i_{j}}& \ldots& \ldots& \theta_{k_1,(2^{k_1})!-1}\\
\theta_{k_2,0}&  \theta_{k_2,1} &\ldots & \theta_{k_2,i_{j}}& \ldots&\ldots&\ldots &\theta_{k_2,(2^{k_2})!-1}\\
\vdots& \vdots &\ddots & \vdots&\ldots&\ldots&\ldots&\ldots&\ldots\\
\theta_{k_n,0}& \theta_{k_n,1} &\ldots & \theta_{k_n,i_{j}} &\ldots& \theta_{k_n,(2^{k_n})!-1}&\\
\dots  &   \dots &  \dots        & \dots &         \dots               &  \dots &  \dots &  \dots &  \dots           
\end{pmatrix}.
\end{equation}
Here for a fixed number $k_n$ there exist $(2^{k_n})!$ $k_n$-digit converters (operators) and $i_{j}=0,1, \dots , (s^{k_n})!-~1$. In this matrix, a number of rows can  equal to infinity (whenever $(k_n)$ is an infinite sequence) and the number of elements in the $n$-th row equals  $(2^{k_n})!$.

Let us generate a sequence $(\theta_{k_n,i_{n}})$ by selecting an arbitrary unique  operator $\theta_{k_n, i_{n}}$ from the fixed row $n$ (in a column $i_n$) for all $n \in \mathbb  N$.

 Let us investigate the representation of real numbers from $[0,1]$ that is related with the binary representation by the following rule:
\begin{equation}
\label{eq: definition of pseudo-binary}
y=\Delta^{\left(2, (\theta_{k_n, i_n})\right)} _{\beta_1\beta_2...\beta_n...}\equiv\Delta^2 _{\theta_{k_1,i_1} (\alpha_{1}, \alpha_2,...,\alpha_{k_1})...\theta_{k_n,i_n} (\alpha_{k_1+...+k_{n-1}+1}, ...,\alpha_{k_1+...+k_n})...}\equiv\sum^{\infty} _{j=1}{\frac{\beta_j}{2^j}},
\end{equation}
where
$$
\left(\beta_{1},\beta_{2},\dots ,\beta_{k_1}\right)=\theta_{k_1,i_1} \left(\alpha_{1},\alpha_{2},\dots ,\alpha_{k_1}\right),
$$
$$
\left(\beta_{k_1+1},\beta_{k_1+2},\dots ,\beta_{k_1+k_2}\right)=\theta_{k_2,i_2} \left(\alpha_{k_1+1},\alpha_{k_1+2},\dots ,\alpha_{k_1+k_2}\right),
$$
$$
\dots \dots \dots \dots \dots \dots \dots 
$$
$$
\left(\beta_{k_1+\dots+k_{n-1}+1},\dots ,\beta_{k_1+\dots+k_{n-1}+k_n}\right)=\theta_{k_n,i_n} \left(\alpha_{k_1+\dots+k_{n-1}+1},\dots ,\alpha_{k_1+\dots+k_{n-1}+k_n}\right),
$$
$$
\dots \dots \dots \dots \dots \dots \dots 
$$
That is, for any $ x=\Delta^2 _{\alpha_1\alpha_2...\alpha_n...}\in [0,1]$, the relationship between the binary and pseudo-binary representations is 
\begin{equation}
\label{eq: 11}
f_{2^{'}}:~~~~~~~~~~~ x=\Delta^2 _{\alpha_1\alpha_2...\alpha_n...} \to \Delta^{\left(2, (\theta_{k_n, i_n})\right)} _{\beta_1\beta_2...\beta_n...}=f_{2^{'}}(x)=y.
\end{equation}

The following condition,  which depends on the definition of $\theta_{k_n, i_n}$, where $k_n, i_n$ are   fixed numbers, holds for $1\le t\le 2^{k_n}$:
$$
\underbrace{\theta_{k_n,i_n} \circ \theta_{k_n,i_n} \circ \ldots \circ \theta_{k_n,i_n}}_{t}(\alpha_{k_1+\dots +k_{n-1}+1},\dots , \alpha_{k_1+\dots +k_n})=(\alpha_{k_1+\dots +k_{n-1}+1},\dots , \alpha_{k_1+\dots +k_n})
$$

\begin{definition} A number $x\in[0,1]$ of the form
$$
x=\Delta^2 _{\alpha_1\alpha_2...\alpha_{n-1}\alpha_n(0)}=\Delta^2 _{\alpha_1\alpha_2...\alpha_{n-1}[\alpha_n-1](1)}
$$
is called \emph{binary rational}. The other numbers have the unique binary representation and are called \emph{binary irrational}. Also, numbers of the form
$$
\Delta^{\left(2, (\theta_{k_n, i_n})\right)} _{\beta_1\beta_2...\beta_{n-1}\beta_n(0)}=\Delta^{\left(2, (\theta_{k_n, i_n})\right)} _{\beta_1\beta_2...\beta_{n-1}[\beta_n-1](1)}
$$
are called \emph{pseudo-binary rational}.  The set of all pseudo-binary rational numbers is a countable set. The other numbers are called \emph{pseudo-binary irrational}.
\end{definition}
\begin{theorem}
Any number $x\in[0,1]$ can be represented by form \eqref{eq: definition of pseudo-binary}.
\end{theorem}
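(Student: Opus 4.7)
The plan is to reduce the statement to the fact that each operator $\theta_{k_n,i_n}$ is a bijection on $A_2^{k_n}$ and that the classical binary representation already covers $[0,1]$. First I would fix an arbitrary $x\in[0,1]$ and invoke the standard binary representation, writing $x=\sum_{j=1}^{\infty}\delta_j/2^j$ with $\delta_j\in\{0,1\}$. The goal is to produce a sequence $(\alpha_j)$ of binary digits whose image under the block-wise application of the operators $\theta_{k_n,i_n}$ is exactly $(\delta_j)$; once this is done, setting $\beta_j:=\delta_j$ makes $x=\sum_{j=1}^{\infty}\beta_j/2^j$ into a pseudo-binary representation of the form \eqref{eq: definition of pseudo-binary}.

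The next step is to construct $(\alpha_j)$ block by block. For each $n\in\mathbb{N}$ the operator $\theta_{k_n,i_n}:A_2^{k_n}\to A_2^{k_n}$ is a bijection (it is one of the $(2^{k_n})!$ permutations of $A_2^{k_n}$ by the definition given in the introduction), so it admits an inverse $\theta_{k_n,i_n}^{-1}$. I would then define
\[
\bigl(\alpha_{k_1+\dots+k_{n-1}+1},\dots,\alpha_{k_1+\dots+k_n}\bigr)
:=\theta_{k_n,i_n}^{-1}\bigl(\delta_{k_1+\dots+k_{n-1}+1},\dots,\delta_{k_1+\dots+k_n}\bigr)
\]
for every $n\geq 1$. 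By construction, applying $\theta_{k_n,i_n}$ to this block reproduces the corresponding block of $\delta$'s, so the $\beta_j$ generated according to the rule in the statement satisfy $\beta_j=\delta_j$ for all $j\in\mathbb{N}$.

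Finally, I would combine these observations: since the sequence $(\alpha_j)$ is a legitimate binary sequence and the induced $(\beta_j)$ equals $(\delta_j)$, one obtains
\[
x=\sum_{j=1}^{\infty}\frac{\delta_j}{2^j}=\sum_{j=1}^{\infty}\frac{\beta_j}{2^j}=\Delta^{(2,(\theta_{k_n,i_n}))}_{\beta_1\beta_2\dots\beta_n\dots},
\]
which is exactly \eqref{eq: definition of pseudo-binary}. This completes the existence statement.

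There is no real obstacle, since the content of the theorem is essentially a bookkeeping observation: the operators only permute blocks of digits without altering the alphabet $\{0,1\}$ or the partial sums that compose $x$. The only delicate point worth noting (though not required by the statement itself) is non-uniqueness at dyadic points: if $x$ admits the two classical binary expansions, the construction above can be run on each of them and may yield different pseudo-binary representations, mirroring the $3'$-rational/$3'$-irrational dichotomy described in the ternary lemma. I would mention this parenthetically but not dwell on it, since the theorem asks only for the existence of at least one representation.
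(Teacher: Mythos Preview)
Your argument is correct and considerably more direct than the paper's. You exploit precisely the defining property of the operators---that each $\theta_{k_n,i_n}$ is a bijection on $A_2^{k_n}$---to invert block by block and pull a given binary expansion of $x$ back to a sequence $(\alpha_j)$; this settles existence in one line.

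The paper proceeds differently. It studies the forward map $f_{2'}:\Delta^2_{\alpha_1\alpha_2\ldots}\mapsto\Delta^{(2,(\theta_{k_n,i_n}))}_{\beta_1\beta_2\ldots}$, introduces the dichotomy of binary rational versus binary irrational points and pseudo-binary rational versus pseudo-binary irrational images, and carries out a lengthy case analysis of when the two images $y_1,y_2$ of a binary rational point coincide. It then concludes from $\inf f_{2'}=0$, $\sup f_{2'}=1$ and the countability of the exceptional sets that $f_{2'}$ ``determines a numeral system''. Strictly speaking, the surjectivity step in the paper is not made as explicit as in your argument; what the paper is really setting up is the finer information needed for the two corollaries that follow (that $f_{2'}$ is bijective off a countable set, and that each fibre $f_{2'}^{-1}(y)$ has one or two elements). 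Your route buys a clean proof of the theorem as stated; the paper's route buys the structural description of the map that is used immediately afterwards. Your closing remark about the dyadic ambiguity is appropriate and matches the spirit of the paper's discussion, though as you note it is not required for existence.
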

\begin{proof}
Suppose that $(\theta_{k_n,i_n})$ is a fixed sequence of operators (converters) from matrix \eqref{eq: operator matrix2}. 

The value of any operator $\theta_{k_n,i_n}$, $n=1,2,\dots ,$ is defined by the table
\begin{center}
\begin{tabular}{|c|c|}
\hline
$\alpha_{k_1+\dots +k_{n-1}+1} \ldots  \alpha_{k_1+\dots +k_n}$  &$\beta_{k_1+\dots +k_{n-1}+1} \ldots  \beta_{k_1+\dots +k_n}$\\
\hline
$ \underbrace{0 0\ldots 00}_{k_n}$ &$\theta_{k_n,i_n} ( \underbrace{0 0\ldots 00}_{k_n})$  \\
\hline
$ \underbrace{0 0\ldots 01}_{k_n}$ &$\theta_{k_n,i_n} ( \underbrace{0 0\ldots 01}_{k_n})$  \\
\hline
$\dots $ &$\dots$  \\
\hline
$ \underbrace{11\ldots 11}_{k_n}$ &$\theta_{k_n,i_n} ( \underbrace{11\ldots 11}_{k_n})$  \\
\hline
\end{tabular}
\end{center}

Let us consider the map \eqref{eq: 11} and an arbitraty number $x_0$. 
\begin{remark}
Let us remark that images of the different binary representations of a binary rational number $x_0$ under the  map \eqref{eq: 11} can be two different numbers (see below). It is important the following: 
\begin{itemize}
\item If we investigate properties of  the map \eqref{eq: 11} as a function, then it is sufficiently do  not consider one of two different binary representations  for all binary rational numbers. Thus, binary rational points are points of discontinuity of the function. For the well-posedness of $f_{2^{'}}$, we shall not consider the binary representation, which has the period~$(1)$ (without the case of the representation of the number $1$).

\item If we investigate a numeral system modeling by the map \eqref{eq: 11}, then we consider the set of all values of this map and the map can not be a function. Here the existence of a preimage of any number from a certain interval is needed.  
\end{itemize}
\end{remark}

If a number $x_0$ is binary rational, i.e., 
$$
x_0= x_1\equiv \Delta^2 _{\alpha_1...\alpha_m(0)= \Delta^2 _{\alpha_1...\alpha_{m-1}[\alpha_m-1](1)}\equiv x_2},
$$
then for $x_0=\Delta^2 _{\alpha_1...\alpha_m(0)}\equiv x_1$ there exists a number $t$ such that
$$
y_1=f_{2^{'}}(x_1)=f_{2^{'}}\left(\Delta^2 _{\alpha_1...\alpha_m(0)}\right)
$$
$$
=\Delta^2 _{{\theta_{k_1,i_1}(\alpha_{1}, ...,\alpha_{k_1})...\theta_{k_{t-1},i_{t-1}}(\alpha_{k_1+...+k_{t-2}+1},...,\alpha_{k_1+...+k_{t-1}})\theta_{k_{t},i_{t}}(\underbrace{\alpha_{k_1+...+k_{t-1}+1}, ..., \alpha_m,0,...,0}_{k_t})\theta_{k_{t+1},i_{t+1}}(\underbrace{0,...,0}_{k_{t+1}}) ...}}.
$$
 Clearly, $y_1$ can be a pseudo-binary irrational number and it is  a pseudo-binary rational number whenever there exists $n_0$ such that for all $l\ge n_0$ we have
\begin{equation}
\label{eq: condition1}
\theta_{k_{t+l},i_{t+l}}(\underbrace{0, 0,\ldots , 0, 0}_{k_{t+l}})=(\underbrace{0, 0, \ldots , 0, 0}_{k_{t+l}})
\end{equation}
or
\begin{equation}
\label{eq: condition2}
\theta_{k_{t+l},i_{t+l}}(\underbrace{0, 0,\ldots , 0, 0}_{k_{t+l}})=(\underbrace{1, 1, \ldots , 1, 1}_{k_{t+l}}).
\end{equation}

By analogy, we get for $x_2\equiv \Delta^2 _{\alpha_1...\alpha_{m-1}[\alpha_m-1](1)}=x_0$ the following:
$$
f_{2^{'}}(x_2)=f_{2^{'}}\left(\Delta^2 _{\alpha_1...\alpha_{m-1}[\alpha_m-1](1)}\right)
$$
$$
=\Delta^2 _{\theta_{k_1,i_1} (\alpha_{1}, ...,\alpha_{k_1})...\theta_{k_{t-1},i_{t-1}} (\alpha_{k_1+...+k_{t-2}+1},...,\alpha_{\omega})\theta_{k_{t},i_{t}}(\underbrace{\alpha_{\omega+1}, ...,\alpha_{m-1}, \alpha_m-1,1,...,1}_{k_t})\theta_{k_{t+1},i_{t+1}}(\underbrace{1,...,1}_{k_{t+1}}) ...}
$$
$=y_2,$  where $\omega=k_1+\dots+k_{t-1}$. Whence $y_2$ is a pseudo-binary rational number whenever there exists $n_0$ such that for all $l\ge n_0$ one of conditions \eqref{eq: condition1}, \eqref{eq: condition2} is true. 

Suppose $n_0$ is a some fixed number. Note that $y_1=y_2$ whenever for all $j=1,2,3, \dots , $ 

$$
\left[
\begin{aligned}
\left[
\begin{aligned}
\left\{
\begin{aligned}
\theta_{k_{t},i_{t}}(\underbrace{\alpha_{k_1+...+k_{t-1}+1}, \dots, \alpha_m,0,\dots ,0}_{k_t})&=(\beta_{k_1+...+k_{t-1}+1},\dots , \beta_m, \dots , \beta_{k_1+...+k_t})\\
\theta_{k_{t+j},i_{t+j}}(\underbrace{0, \dots , 0}_{k_{t+j}})  & =(\underbrace{0, \dots , 0}_{k_{t+j}})\\
\theta_{k_{t},i_{t}}(\underbrace{\alpha_{k_1+...+k_{t-1}+1}, \dots, \alpha_{m-1},\alpha_m-1,1,...,1}_{k_t})&=(\beta_{k_1+...+k_{t-1}+1},\dots , \beta_{k_1+...+k_t-1}, \beta_{k_1+...+k_t}-1)\\
\theta_{k_{t+j},i_{t+j}}(\underbrace{1, \dots , 1}_{k_{t+j}})  & =(\underbrace{1, \dots , 1}_{k_{t+j}})\\
\end{aligned}
\right.\\
\left\{
\begin{aligned}
\theta_{k_{t},i_{t}}(\underbrace{\alpha_{k_1+...+k_{t-1}+1}, \dots, \alpha_m,0,\dots ,0}_{k_t})&=(\beta_{k_1+...+k_{t-1}+1},\dots , \beta_{k_1+...+k_t-1}, \beta_{k_1+...+k_t}-1)\\
\theta_{k_{t+j},i_{t+j}}(\underbrace{0, \dots , 0}_{k_{t+j}})  & =(\underbrace{1, \dots , 1}_{k_{t+j}})\\
\theta_{k_{t},i_{t}}(\underbrace{\alpha_{k_1+...+k_{t-1}+1}, \dots, \alpha_{m-1},\alpha_m-1,1,...,1}_{k_t})&=(\beta_{k_1+...+k_{t-1}+1},\dots , \beta_m, \dots , \beta_{k_1+...+k_t})\\
\theta_{k_{t+j},i_{t+j}}(\underbrace{1, \dots , 1}_{k_{t+j}})  & =(\underbrace{0, \dots , 0}_{k_{t+j}})\\
\end{aligned}
\right.\\
\end{aligned}
\right.\\
\left[
\begin{aligned}
\left\{
\begin{aligned}
\theta_{k_{t+n_0},i_{t+n_0}}(\underbrace{0, \dots , 0}_{k_{t+n_0}})  & =(\beta_{k_1+...+k_{t+n_0-1}+1},\dots ,  \beta_{k_1+\dots +k_{t+n_0}})\\
\theta_{k_{t+l},i_{t+l}}(\underbrace{0, \dots , 0}_{k_{t+l}})  & =(\underbrace{0, \dots , 0}_{k_{t+l}})&\text{for all $l>n_0$}\\
\theta_{k_{t+n_0},i_{t+n_0}}(\underbrace{1, \dots , 1}_{k_{t+n_0}})  & =(\beta_{k_1+...+k_{t+n_0-1}+1},\dots ,  \beta_{k_1+\dots +k_{t+n_0}-1}, \beta_{k_1+\dots +k_{t+n_0}}-1)\\
\theta_{k_{t+l},i_{t+l}}(\underbrace{1, \dots , 1}_{k_{t+l}})  & =(\underbrace{1, \dots , 1}_{k_{t+l}})&\text{for all $l>n_0$}\\
\end{aligned}
\right.\\
\left\{
\begin{aligned}
\theta_{k_{t+n_0},i_{t+n_0}}(\underbrace{0, \dots , 0}_{k_{t+n_0}})  & =(\beta_{k_1+...+k_{t+n_0-1}+1},\dots ,  \beta_{k_1+\dots +k_{t+n_0}-1}, \beta_{k_1+\dots +k_{t+n_0}}-1)\\
\theta_{k_{t+l},i_{t+l}}(\underbrace{0, \dots , 0}_{k_{t+l}})  & =(\underbrace{1, \dots , 1}_{k_{t+l}})&\text{for all $l>n_0$}\\
\theta_{k_{t+n_0},i_{t+n_0}}(\underbrace{1, \dots , 1}_{k_{t+n_0}})  & =(\beta_{k_1+...+k_{t+n_0-1}+1},\dots ,  \beta_{k_1+\dots +k_{t+n_0}})\\
\theta_{k_{t+l},i_{t+l}}(\underbrace{1, \dots , 1}_{k_{t+l}})  & =(\underbrace{0, \dots , 0}_{k_{t+l}})&\text{for all $l>n_0$}.\\
\end{aligned}
\right.\\
\end{aligned}
\right.\\
\end{aligned}
\right.
$$

If $x_0=\Delta^2 _{\alpha_1\alpha_2...\alpha_n...}$ is a binary-irrational number, then $y_0=f_{2^{'}}(x_0)= \Delta^{\left(2, (\theta_{k_n, i_n})\right)} _{\beta_1\beta_2...\beta_n...}$ can be pseudo-binary rational (whenever its representation contains a period $(0)$ or $(1)$) or pseudo-binary irrational. It depends on the choice of the  sequence $(\theta_{k_n,i_n})$.

In the general case, let us choose any $x^{'}, x^{''}\in[0,1]$.

If $x^{'}\ne x^{''}$, then:
\begin{itemize}
\item $y^{'}=f_{2^{'}}(x^{'})=f_{2^{'}}(x^{''})=y^{''}$  whenever only $y^{'}=y^{''}$ is pseudo-binary rational;
\item in the other case, $y^{'}\ne y^{''}$ when $x^{'}, x^{''}$ are binary irrational numbers and $y^{'},y^{''}$ are pseudo-binary irrational numbers or different pseudo-binary rational numbers.
\end{itemize}

Suppose  $x^{'}= x^{''}$, i.e., $x^{'}, x^{''}$ are binary rational, since only  binary rational numbers have two different binary representations. Then  $f_{2^{'}}(x^{'})=f_{2^{'}}(x^{''})$ or $f_{2^{'}}(x^{'})\ne f_{2^{'}}(x^{''})$ by analogy.

Since
$$
\inf_{x\in [0,1]}{f_{2^{'}}(x)}=0, ~~~~~~~~~~\sup_{x\in [0,1]}{f_{2^{'}}(x)}=1,
$$
and  the sets of all binary rational and pseudo-binary rational numbers are countable sets, we obtain that the set of all numbers having the unique pseudo-binary representation is a set of full Lebesgue measure and the map $f_{2^{'}}$ determines a numeral system.
\end{proof}

\begin{corollary}
Any map $f_{2^{'}}$ is a bijective mapping on the set
$$
[0,1]\setminus \left(S_Q\cup S^{'} _Q\right),
$$
where $S_Q$  is the set of all binary rational points and 
$$
S^{'} _Q\equiv\left\{x: f_{2^{'}}(x) \text{is pseudo-binary rational}\right\}.
$$
\end{corollary}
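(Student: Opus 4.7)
The plan is to leverage the theorem above and extract the injective behaviour of $f_{2^{'}}$ from the case analysis already carried out there. Write $E := [0,1]\setminus(S_Q \cup S^{'}_Q)$. By construction every $x \in E$ is binary irrational, so $x$ has a \emph{unique} binary expansion $\Delta^{2}_{\alpha_1\alpha_2\ldots}$; moreover $f_{2^{'}}(x)$ is pseudo-binary irrational, so $f_{2^{'}}(x)$ also has a unique pseudo-binary expansion $\Delta^{\left(2,(\theta_{k_n,i_n})\right)}_{\beta_1\beta_2\ldots}$. Thus on $E$ both sides of the map are genuinely single-valued correspondences between numbers and digit sequences, which is exactly what is needed to talk about bijectivity.

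For injectivity, take $x^{'}, x^{''} \in E$ with $x^{'} \ne x^{''}$. Their unique binary expansions must differ in some position $j$. Let $n$ be the block index satisfying $k_1+\cdots+k_{n-1} < j \le k_1+\cdots+k_n$, so the $j$-th digit sits inside the $n$-th input block of $\theta_{k_n,i_n}$. The two input $k_n$-tuples fed into $\theta_{k_n,i_n}$ for $x^{'}$ and $x^{''}$ are therefore distinct; since $\theta_{k_n,i_n}\colon A_2^{k_n}\to A_2^{k_n}$ is by definition a bijection, the two corresponding output $k_n$-tuples also differ. Hence the pseudo-binary expansions of $f_{2^{'}}(x^{'})$ and $f_{2^{'}}(x^{''})$ differ in at least one position; because these expansions are unique (neither image is pseudo-binary rational), the numbers $f_{2^{'}}(x^{'})$ and $f_{2^{'}}(x^{''})$ themselves must differ.

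For surjectivity onto the image $f_{2^{'}}(E)$ --- which is what ``bijective on $E$'' amounts to --- I would exhibit an explicit inverse. Given $y \in f_{2^{'}}(E)$ with its unique pseudo-binary expansion $\Delta^{\left(2,(\theta_{k_n,i_n})\right)}_{\beta_1\beta_2\ldots}$, apply $\theta_{k_n,i_n}^{-1}$ block-by-block to recover a binary sequence $(\alpha_n)$, $\alpha_n \in A_2$, and set $x := \sum^{\infty}_{n=1}{\alpha_n/2^n}$. This $x$ lies in $E$: it is binary irrational (otherwise the forward block-wise computation would push $y$ into the pseudo-binary rationals via conditions \eqref{eq: condition1}--\eqref{eq: condition2} of the preceding proof, contradicting $y$ being pseudo-binary irrational), and $f_{2^{'}}(x) = y$ by construction.

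The main obstacle, in my view, is the bookkeeping that rules out the degenerate case where inverse reconstruction yields a binary sequence with an all-$0$ or all-$1$ tail while $y$ itself is pseudo-binary irrational. This is handled by the same two-expansion matching conditions spelled out in the proof of the preceding theorem (the large bracketed system in $t$ and $n_0$), now applied in the reverse direction; no new case analysis is required, only a careful transcription, after which the corollary follows immediately from the injectivity argument.
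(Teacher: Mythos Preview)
Your proposal is correct and aligns with the paper's approach: the corollary carries no separate proof in the paper --- it is stated as an immediate consequence of the case analysis in the preceding theorem, and your injectivity argument (unique expansions on both sides, block-wise bijectivity of each $\theta_{k_n,i_n}$) is precisely the explicit unpacking of that case analysis. One minor remark: the surjectivity paragraph and the ``main obstacle'' discussion are superfluous, since ``bijective on $E$'' here just means injective on $E$ (hence bijective onto $f_{2^{'}}(E)$), and that is already secured by your injectivity step; you can safely drop the inverse-reconstruction bookkeeping.
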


\begin{corollary} The set
$$
\left\{x: x=f^{-1} _{2^{'}}(y)\right\}
$$
is an one- or two-element set for any $y\in[0,1]$.
\end{corollary}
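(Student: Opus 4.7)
The plan is to lift the question to binary sequences, where everything becomes a combinatorial bijection. First I would observe that each operator $\theta_{k_n,i_n}$ is, by definition, a bijection of $A_2^{k_n}$. Concatenating them block-by-block yields a bijection $\tilde f$ on the set of all binary sequences $\{0,1\}^{\mathbb N}$, whose inverse is obtained by applying $\theta_{k_n,i_n}^{-1}$ to the $n$th block of a sequence. Moreover, by the very construction of the pseudo-binary representation in \eqref{eq: definition of pseudo-binary}, if $(\alpha_j)$ is a binary representation of $x$ then $\tilde f((\alpha_j))$ is a pseudo-binary representation of $y = f_{2'}(x)$; but a pseudo-binary representation, read as a $\{0,1\}$-sequence weighted by $2^{-j}$, is nothing other than a binary representation of the same real number $y$.

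Next I would classify preimages according to how many binary representations $y$ admits. Let $\psi(y)\subseteq \{0,1\}^{\mathbb N}$ denote the set of binary representations of $y \in [0,1]$; classically $|\psi(y)|=2$ when $y$ is binary rational and $|\psi(y)|=1$ otherwise. The condition $f_{2'}(x)=y$ is equivalent to the existence of some $(\beta_j)\in\psi(y)$ with $\tilde f^{-1}((\beta_j))\in\psi(x)$. Since $\tilde f^{-1}$ is a bijection on sequences, the set $\tilde f^{-1}(\psi(y))$ contains at most two binary sequences, and these determine at most two real numbers in $[0,1]$. Non-emptiness of the preimage comes for free from the preceding theorem, which established that $f_{2'}$ is surjective on $[0,1]$.

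The main subtle point I anticipate is that two distinct binary sequences in $\tilde f^{-1}(\psi(y))$ may still sum to the same real number, namely when both of them happen to be the two binary expansions of a common binary rational $x$; such coincidences collapse the preimage from two points to one, but they can never inflate it above two. To wrap up, I would isolate the two regimes explicitly: the preimage is a singleton precisely when either $y$ is binary irrational, or $y$ is binary rational and $\tilde f^{-1}$ sends its two binary representations to the two binary expansions of one and the same binary-rational $x$; in every remaining case the preimage consists of exactly two distinct real numbers. This gives the one-or-two dichotomy asserted by the corollary.
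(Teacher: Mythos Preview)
Your argument is correct. The paper does not give a separate proof of this corollary; it is left as an immediate consequence of the proof of the preceding theorem, where the forward analysis of $f_{2'}$ on binary-rational versus binary-irrational inputs already establishes that two distinct $x',x''$ can have $f_{2'}(x')=f_{2'}(x'')$ only when the common value is pseudo-binary rational. Your route is slightly different in emphasis: rather than tracking the forward case analysis, you lift everything to the sequence space $\{0,1\}^{\mathbb N}$, observe that the block-wise application of the $\theta_{k_n,i_n}$ is a genuine bijection $\tilde f$ there, and then read off the preimage bound directly from $|\psi(y)|\le 2$. This is cleaner and makes the role of the dyadic ambiguity completely transparent; the paper's implicit argument reaches the same conclusion but through the longer case distinctions recorded in the proof of the theorem. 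Both approaches rest on the same two facts---bijectivity at the level of digit sequences and the at-most-two binary expansions of a real number---so there is no substantive divergence.
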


\begin{lemma}
\label{lm: lemma-f2-continuous}
A map $f_{2^{'}}$ is continuous at binary irrational points.

According to a sequence $(\theta_{k_n, i_n})$, a certain binary rational point is a  point of discontinuity of $f_{2^{'}}$ or $f_{2^{'}}$ is continuous at this point. 
\end{lemma}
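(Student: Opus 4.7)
The plan is to establish continuity at binary irrational points by a direct cylinder argument, and then, at binary rational points, to identify the one-sided limits of $f_{2'}$ with the two values $y_1, y_2$ already analysed in the proof of the preceding theorem; continuity at such a point then reduces to the equality $y_1 = y_2$, which by that proof is controlled entirely by the chosen sequence $(\theta_{k_n, i_n})$.

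First I would prove continuity at a binary irrational point $x_0 = \Delta^2 _{\alpha_1 \alpha_2 \ldots}$. Such a point has a unique binary expansion and therefore lies in the interior of every cylinder $\Delta^2 _{\alpha_1 \ldots \alpha_{N_n}}$, where $N_n := k_1 + k_2 + \cdots + k_n$. Consequently, for each $n \in \mathbb{N}$ there exists $\delta_n > 0$ such that $|x - x_0| < \delta_n$ forces $x$ to admit a binary expansion beginning with $\alpha_1, \ldots, \alpha_{N_n}$. The block structure of \eqref{eq: definition of pseudo-binary} then ensures that the first $N_n$ pseudo-binary digits of $f_{2'}(x)$ agree with those of $f_{2'}(x_0)$, so
\begin{equation*}
|f_{2'}(x) - f_{2'}(x_0)| \;\leq\; \sum_{j = N_n + 1}^{\infty} \frac{1}{2^j} \;=\; \frac{1}{2^{N_n}}.
\end{equation*}
Choosing $n$ with $2^{-N_n} < \varepsilon$ supplies the required $\varepsilon$-$\delta$ condition.

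Next, at a binary rational point $x_0 = \Delta^2 _{\alpha_1\ldots\alpha_m(0)} = \Delta^2 _{\alpha_1\ldots\alpha_{m-1}[\alpha_m - 1](1)}$, I would analyse one-sided limits. A sequence of binary irrational numbers $x_l \to x_0^+$ has expansions of the form $\Delta^2 _{\alpha_1\ldots\alpha_m 0\ldots 0 \gamma_l \ldots}$ in which the run of zeros after position $m$ tends to infinity with $l$; applying the cylinder argument above within this class gives $f_{2'}(x_l) \to y_1$, where $y_1$ is precisely the image of the tail-$(0)$ expansion of $x_0$ as computed in the proof of the preceding theorem. Symmetrically, $x_l \to x_0^-$ forces $f_{2'}(x_l) \to y_2$, the image of the tail-$(1)$ expansion. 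Hence $f_{2'}$ is continuous at $x_0$ if and only if $y_1 = y_2$, and the preceding theorem already catalogues the precise conditions on the block $\theta_{k_t, i_t}$ containing position $m$ together with $\theta_{k_{t+l}, i_{t+l}}(\underbrace{0, 0, \ldots, 0}_{k_{t+l}})$ and $\theta_{k_{t+l}, i_{t+l}}(\underbrace{1, 1, \ldots, 1}_{k_{t+l}})$ for large $l$ under which this equality holds; sequences $(\theta_{k_n, i_n})$ violating those conditions produce a genuine jump at $x_0$, while sequences satisfying them yield continuity.

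The main obstacle is the index bookkeeping in the second part: one must first locate the block index $t$ with $m \in [k_1 + \cdots + k_{t-1} + 1,\; k_1 + \cdots + k_t]$, then translate the two one-sided approaches at $x_0$ into the corresponding tail-extension patterns at the digit level, and finally invoke the four-way case split of the preceding theorem to decide equality of $y_1$ and $y_2$. A concrete discontinuity is produced, for instance, by taking $k_n \equiv 1$ and choosing $\theta_{1, i_n}$ for $n > m$ so that neither \eqref{eq: condition1} nor \eqref{eq: condition2} holds for infinitely many such $n$, which yields $|y_1 - y_2| > 0$.
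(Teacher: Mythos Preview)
Your proposal is correct and follows essentially the same approach as the paper: a digit-agreement bound for the irrational case and identification of the one-sided limits with the two images of the tail-$(0)$ and tail-$(1)$ expansions at a rational point. Your version is in fact a bit tidier, since you align the cylinders with the block boundaries $N_n = k_1+\cdots+k_n$ (so output digits automatically agree on whole blocks), and you make explicit the link to the $y_1,y_2$ computation of the preceding theorem, which the paper's proof leaves implicit.
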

\begin{proof}
Let $x_0$ be a binary irrational number. Then there exists a positive integer $n_0\ge 2$ such that the following system of conditions holds for $x_0=\Delta^2 _{\alpha_1\alpha_2...\alpha_n...}$ and $x=\Delta^2 _{\gamma_1\gamma_2...\gamma_n...}$:
$$
\begin{cases}
\alpha_r=\gamma_r &\text{for $r=\overline{1, n_0-1}$}\\
\alpha_{n_0}\ne\gamma_{n_0}.
\end{cases}
$$
From this system, it follows that the conditions $n_0\to\infty$ and $x\to x_0$ are equivalent. In addition, for 
$$
f_{2^{'}}(x_0)=f_{2^{'}}\left(\Delta^2 _{\alpha_1\alpha_2...\alpha_n...}\right)=\Delta^{\left(2, (\theta_{k_n, i_n})\right)} _{\beta_1\beta_2...\beta_n...}
$$
and
$$
f_{2^{'}}(x)=f_{2^{'}}\left(\Delta^2 _{\gamma_1\gamma_2...\gamma_n...}\right)=\Delta^{\left(2, (\theta_{k_n, i_n})\right)} _{\delta_1\delta_2...\delta_n...},
$$
we get
$$
\left|f_{2^{'}} (x) - f_{2^{'}}(x_0)\right|=\left|\sum^{\infty} _{n=1} {\frac{\delta_n -\beta_n }{2^n}}\right|\le \sum^{\infty} _{l=m_0} {\frac{|\delta_l-\beta_l|}{2^l}}\le \sum^{\infty} _{l=m_0} {\frac{1}{2^l}}=\frac{1}{2^{m_0-1}}\to 0 ~\mbox{as} ~m_0 \to \infty,
$$
since for any $n\in\mathbb N$ the inequality $\frac{|\delta_n -\beta_n |}{2^n}\le \frac{1}{2^n}$ holds. Here $m_0 \ge 2$ is an integer such that $\delta_j(x)=\beta_j(x_0)$ for all $j=1, 2, \dots , m_0-1$.

So $f_{2^{'}}$ is continuous at binary irrational points.

In the case when $x_0$ is binary rational, i.e., 
$$
x_0=\Delta^2 _{\alpha_1...\alpha_m(0)}=\Delta^2 _{\alpha_1...\alpha_{m-1}[\alpha_m-1](1)},
$$ 
it is clear that 
$$
\lim_{x\to x^- _0}{f_{2^{'}}(x)}=f_{2^{'}}\left(\Delta^2 _{\alpha_1...\alpha_{m-1}[\alpha_m-1](1)}\right)
$$
and
$$
\lim_{x\to x^+ _0}{f_{2^{'}}(x)}=f_{2^{'}}\left(\Delta^2 _{\alpha_1...\alpha_{m-1}\alpha_m(0)}\right).
$$
Then:
\begin{itemize}
\item if $\lim_{x\to x^- _0}{f_{2^{'}}(x)}\ne\lim_{x\to x^+ _0}{f_{2^{'}}(x)}$, then $x_0$ is a point of discontinuity;
\item if $\lim_{x\to x^{-} _0 }{f_{2^{'}}(x)}=\lim_{x\to x^{+} _0 }{f_{2^{'}}(x)}$, then $f_{2^{'}}$ is continuous at $x_0$.
\end{itemize}
\end{proof}
\begin{definition}
A set of the form
$$
\Delta^{\left(2, (\theta_{k_n, i_n})\right)} _{c_1c_2...c_{k_1+k_2+...+k_n}}\equiv\left\{z: z=\Delta^{\left(2, (\theta_{k_n, i_n})\right)} _{c_1c_2...c_{k_1+k_2+...+k_n}\beta_{k_1+k_2+...+k_n+1}...\beta_{k_1+k_2+...+k_{n}+l}...}\right\},
$$
where  $c_1, \dots , c_{k_1+k_2+...+k_n}$ is a fixed binary tuple, $l=1,2,\dots$, and $\beta_{k_1+...+k_{n}+l}\in\{0,1\}$ such that 
$$
\left(\beta_{k_1+k_2+...+k_{n+l-1}+1},\dots , \beta_{k_1+k_2+...+k_{n+l}}\right)=\theta_{k_{n+l},i_{n+l}}\left(\alpha_{k_1+k_2+...+k_{n+l-1}+1},\dots , \alpha_{k_1+k_2+...+k_{n+l}}\right),
$$
is called \emph{ a pseudo-binary cylinder of rank $k_1+k_2+\dots +k_{n}$ with base ${c_1c_2...c_{k_1+k_2+...+k_n}}$}.
\end{definition}

One can note that a cylinder $\Delta^2 _{b_1b_2...b_n}$ is a closed interval,  the Lebesgue measure $\lambda\left(\Delta^2 _{b_1b_2...b_n}\right)=\left|\Delta^2 _{b_1b_2...b_n}\right|$ of $\Delta^2 _{b_1b_2...b_n}$ and the diameter $d \left(\Delta^2 _{b_1b_2...b_n}\right)=\sup\Delta^2 _{b_1b_2...b_n}-\inf \Delta^2 _{b_1b_2...b_n}$  are equal.

\begin{remark}
\label{remark-cylinders} The following considerations are useful for proving the next lemma on properties of pseudo-binary cylinders.

Let us have a binary cylinder $\Delta^2 _{b_1b_2...b_{k_1+...+k_n}}$ of rank $k_1+\dots+k_n$ with base ${b_1b_2...b_{k_1+...+k_n}}$. Here $b_1,b_2,\dots , b_{k_1+...+k_n}$ is a fixed binary tuple. That is, 
$$
\Delta^2 _{b_1b_2...b_{k_1+...+k_n}}\equiv\left\{x: x=\Delta^2 _{b_1b_2...b_{k_1+...+k_n}\alpha_{k_1+...+k_n+1}\alpha_{k_1+...+k_n+2}...}\right\},
$$
where $\alpha_j\in\{0,1\}$ for all integers $ j>k_1+k_2+\dots +k_n+1$.
Whence
$$
\Delta^{\left(2, (\theta_{k_n, i_n})\right)} _{c_1c_2...c_{k_1+k_2+...+k_n}}=f_{2^{'}}\left(\Delta^2 _{b_1b_2...b_{k_1+...+k_n}}\right)
$$
$$
=\Delta^2 _{\theta_{k_1,i_1}(b_1,...,b_{k_1})\theta_{k_2,i_2}(b_{k_1+1},...,b_{k_1+k_2})...\theta_{k_n, i_n}(b_{k_1+...+k_{n-1}+1},...,b_{k_1+...+k_n})},
$$
where $(c_{k_1+\dots+k_{j-1}+1},\dots ,c_{k_1+...+k_j})=\theta_{k_j, i_j}(b_{k_1+...+k_{j-1}+1},\dots ,b_{k_1+...+k_j})$ for all $j=\overline{1,n}$.

Since for any $\Delta^2 _{b_1b_2...b_n}$  the condition 
$$
\left |\Delta^2 _{b_1b_2...b_n}\right|=\frac{1}{2^n}
$$
holds, we obtain
$$
d\left(\Delta^{\left(2, (\theta_{k_n, i_n})\right)} _{c_1c_2...c_{k_1+k_2+...+k_n}}\right)=\left|\Delta^{\left(2, (\theta_{k_n, i_n})\right)} _{c_1c_2...c_{k_1+k_2+...+k_n}}\right|
$$
$$
=\sup\Delta^{\left(2, (\theta_{k_n, i_n})\right)} _{c_1c_2...c_{k_1+k_2+...+k_n}}-\inf\Delta^{\left(2, (\theta_{k_n, i_n})\right)} _{c_1c_2...c_{k_1+k_2+...+k_n}}
$$
$$
=\Delta^{\left(2, (\theta_{k_n, i_n})\right)} _{c_1c_2...c_{k_1+k_2+...+k_n}(1)}-\Delta^{\left(2, (\theta_{k_n, i_n})\right)} _{c_1c_2...c_{k_1+k_2+...+k_n}(0)}=\frac{1}{2^{k_1+k_2+\dots+k_n}}.
$$

So,
$$
\left|\Delta^2 _{b_1b_2...b_{k_1+...+k_n}}\right|=\left|\Delta^{\left(2, (\theta_{k_n, i_n})\right)} _{c_1c_2...c_{k_1+k_2+...+k_n}}\right|.
$$
Also, note that for any $x\in \Delta^2 _{b_1b_2...b_{k_1+...+k_n}}$ 
$$
f_{2^{'}}(x)\in f_{2^{'}}\left(\Delta^2 _{b_1b_2...b_{k_1+...+k_n}}\right)=\Delta^{\left(2, (\theta_{k_n, i_n})\right)} _{c_1c_2...c_{k_1+k_2+...+k_n}},
$$
and
$$
\inf \Delta^{\left(2, (\theta_{k_n, i_n})\right)} _{c_1c_2...c_{k_1+k_2+...+k_n}}\le f_{2^{'}}(x)\le \sup\Delta^{\left(2, (\theta_{k_n, i_n})\right)} _{c_1c_2...c_{k_1+k_2+...+k_n}}.
$$
In addition, if $x_0$ is the endpoint of $\Delta^2 _{b_1b_2...b_{k_1+...+k_n}}$, then
$$
\left|\lim_{x\to x^+ _0}{f_{2^{'}}(x)}-\lim_{x\to x^- _0}{f_{2^{'}}(x)}\right|<\frac{1}{2^{k_1+k_2+\dots+k_{n-1}}}.
$$
This condition follows from the definition of $f_{2^{'}}$, Lemma~\ref{lm: lemma-f2-continuous}, and properties of cylinders.
\end{remark}
\begin{lemma}
Cylinders $\Delta^{\left(2, (\theta_{k_n, i_n})\right)} _{c_1c_2...c_{k_1+k_2+...+k_n}}$ have the following properties:
\begin{enumerate}
\item A cylinder $\Delta^{\left(2, (\theta_{k_n, i_n})\right)} _{c_1c_2...c_{k_1+k_2+...+k_n}}$ is a closed interval and
$$
\Delta^{\left(2, (\theta_{k_n, i_n})\right)} _{c_1c_2...c_{k_1+k_2+...+k_n}}=\left[\Delta^{\left(2, (\theta_{k_n, i_n})\right)} _{c_1c_2...c_{k_1+k_2+...+k_n}(0)}, \Delta^{\left(2, (\theta_{k_n, i_n})\right)} _{c_1c_2...c_{k_1+k_2+...+k_n}(1)}\right].
$$
\item
$$
\left|\Delta^{\left(2, (\theta_{k_n, i_n})\right)} _{c_1c_2...c_{k_1+k_2+...+k_n}}\right|=\frac{1}{2^{k_1+k_2+\dots+k_n}}.
$$
\item
$$
\Delta^{\left(2, (\theta_{k_n, i_n})\right)} _{c_1c_2...c_{k_1+k_2+...+k_n+k_{n+1}}}\subset\Delta^{\left(2, (\theta_{k_n, i_n})\right)} _{c_1c_2...c_{k_1+k_2+...+k_n}}.
$$
\item For any number $x\in [0,1]$, the following is true:
$$
x=\bigcap^{\infty} _{n=1}{\Delta^{\left(2, (\theta_{k_n, i_n})\right)} _{c_1c_2...c_{k_1+k_2+...+k_n}}}.
$$
\item Cylinders $\Delta^{\left(2, (\theta_{k_n, i_n})\right)} _{c_1c_2...c_{k_1+k_2+...+k_n}}$ are situated by the rule:
$$
f_{2^{'}}:\left\{
\begin{aligned}
(\underbrace{0, 0,\ldots, 0,0}_{k_n})&\to \theta_{k_n,i_n}(\underbrace{0, 0,\ldots, 0,0}_{k_n})\\
(\underbrace{0, 0,\ldots, 0,1}_{k_n})&\to \theta_{k_n,i_n}(\underbrace{0, 0,\ldots, 0,1}_{k_n})\\
 \dots \dots \dots \\
(\underbrace{1, 1,\ldots, 1,1}_{k_n})&\to \theta_{k_n,i_n}(\underbrace{1,1,\ldots, 1,1}_{k_n})\\
\end{aligned}
\right.
$$
\end{enumerate}
\end{lemma}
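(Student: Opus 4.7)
The plan is to reduce everything to the standard fact that fixing the first $k_1+\ldots+k_n$ binary digits of a number in $[0,1]$ yields a closed interval of length $2^{-(k_1+\ldots+k_n)}$. The key ingredient that enables this reduction is that every operator $\theta_{k_j,i_j}$ is a bijection on $\{0,1\}^{k_j}$, which is the core structural assumption on the operators. Accordingly, I would organise the argument so that properties (1)--(4) collapse to this identification, and only property (5) requires a separate (brief) bookkeeping step.

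First, I would rewrite the pseudo-binary cylinder as an ordinary subset of $\mathbb{R}$. Let $(c_{k_1+\ldots+k_{j-1}+1},\ldots,c_{k_1+\ldots+k_j})=\theta_{k_j,i_j}(b_{k_1+\ldots+k_{j-1}+1},\ldots,b_{k_1+\ldots+k_j})$ for $j=\overline{1,n}$. Because each $\theta_{k_{n+l},i_{n+l}}$ is a bijection on $\{0,1\}^{k_{n+l}}$, as the preimage block ranges over all $2^{k_{n+l}}$ tuples the image block also takes all $2^{k_{n+l}}$ possible values. Hence the ``tail digits'' $\beta_{k_1+\ldots+k_n+l}$ are free to take any value in $\{0,1\}$, and
\[
\Delta^{\left(2,(\theta_{k_n,i_n})\right)}_{c_1c_2\ldots c_{k_1+\ldots+k_n}}=\sum_{j=1}^{k_1+\ldots+k_n}\frac{c_j}{2^j}+\frac{1}{2^{k_1+\ldots+k_n}}\cdot[0,1].
\]
This identity gives properties (1) and (2) at once: the cylinder is the closed interval with endpoints $\Delta^{(2,(\theta_{k_n,i_n}))}_{c_1c_2\ldots c_{k_1+\ldots+k_n}(0)}$ and $\Delta^{(2,(\theta_{k_n,i_n}))}_{c_1c_2\ldots c_{k_1+\ldots+k_n}(1)}$, and its Lebesgue measure is $2^{-(k_1+\ldots+k_n)}$, in agreement with the computation already carried out in the remark preceding the lemma.

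Properties (3) and (4) are then immediate. The rank-$(k_1+\ldots+k_n+k_{n+1})$ cylinder is obtained by fixing one additional block of $k_{n+1}$ pseudo-binary digits, so it is a subinterval of length $2^{-(k_1+\ldots+k_{n+1})}$ contained in the parent interval; this yields (3). Property (4) follows from the nested interval theorem applied to a decreasing chain of closed intervals whose lengths $2^{-(k_1+\ldots+k_n)}$ tend to $0$, and the unique intersection point is the real number whose pseudo-binary expansion is $c_1c_2\ldots$. Property (5) then records how the $2^{k_n}$ rank-$(k_1+\ldots+k_n)$ subcylinders partition their rank-$(k_1+\ldots+k_{n-1})$ parent: they sit left-to-right inside $[0,1]$ in the natural binary order of the last block, but the label that each of them receives is the image of that block under $\theta_{k_n,i_n}$, which is precisely the rule displayed in the statement.

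The only step requiring genuine care is the first one, where one must verify that the collection of attainable tail sequences $(\beta_{k_1+\ldots+k_n+l})_{l\geq 1}$ is actually all of $\{0,1\}^{\mathbb{N}}$. I expect this to be the main (mild) obstacle, since it is tempting to overlook that bijectivity has to be invoked block by block for every future operator $\theta_{k_{n+l},i_{n+l}}$. Once the identification in the first display is in place, properties (1)--(5) reduce to standard facts about ordinary binary cylinders together with the observation that $\theta_{k_n,i_n}$ acts as a permutation of $\{0,1\}^{k_n}$.
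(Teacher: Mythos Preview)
Your proposal is correct and follows essentially the same approach as the paper: both reduce the pseudo-binary cylinder to the ordinary binary interval $\sum_{j\le k_1+\cdots+k_n} c_j 2^{-j} + 2^{-(k_1+\cdots+k_n)}[0,1]$, deduce (1)--(2) from this, handle (3) by comparing endpoints, obtain (4) from Cantor's intersection theorem, and read off (5) from the definition of $f_{2'}$. The only noteworthy difference is that you make the role of the bijectivity of each $\theta_{k_{n+l},i_{n+l}}$ explicit when arguing that the tail digits range over all of $\{0,1\}^{\mathbb N}$, whereas the paper leaves this implicit in its inf/sup computation (and in the remark preceding the lemma).
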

\begin{proof}\begin{enumerate}
\item Suppose $x\in \Delta^{\left(2, (\theta_{k_n, i_n})\right)} _{c_1c_2...c_{k_1+k_2+...+k_n}}$. Then
$$
x^{'}=\Delta^{\left(2, (\theta_{k_n, i_n})\right)} _{c_1c_2...c_{k_1+k_2+...+k_n}(0)}\le x\le \Delta^{\left(2, (\theta_{k_n, i_n})\right)} _{c_1c_2...c_{k_1+k_2+...+k_n}(1)}=x^{''}.
$$
Hence $x\in[x^{'},x^{''}]$ and $\Delta^{\left(2, (\theta_{k_n, i_n})\right)} _{c_1c_2...c_{k_1+k_2+...+k_n}}\subseteq [x^{'},x^{''}]$. Since
$$
\Delta^{\left(2, (\theta_{k_n, i_n})\right)} _{c_1c_2...c_{k_1+k_2+...+k_n}(0)}=\sum^{k_1+\dots+k_n} _{j=1}{\frac{c_j}{2^j}}+\frac{1}{2^{k_1+k_2+\dots+k_n}}\inf\sum^{\infty} _{t=k_1+...+k_n+1}{\frac{\beta_t}{2^t}}
$$
and
$$
\Delta^{\left(2, (\theta_{k_n, i_n})\right)} _{c_1c_2...c_{k_1+k_2+...+k_n}(1)}=\sum^{k_1+\dots+k_n} _{j=1}{\frac{c_j}{2^j}}+\frac{1}{2^{k_1+k_2+\dots+k_n}}\sup\sum^{\infty} _{t=k_1+...+k_n+1}{\frac{\beta_t}{2^t}},
$$
we obtain that $x, x^{'}, x^{''}\in \Delta^{\left(2, (\theta_{k_n, i_n})\right)} _{c_1c_2...c_{k_1+k_2+...+k_n}}$. So a cylinder $\Delta^{\left(2, (\theta_{k_n, i_n})\right)} _{c_1c_2...c_{k_1+k_2+...+k_n}}$  is a closed interval.

 \item This property follows from the first statement of this lemma and from Remark~\ref{remark-cylinders}.

\item Let $m$ be an arbitrary fixed positive integer. The following conditions
$$
\left\{
\begin{aligned}
\inf\Delta^{\left(2, (\theta_{k_n, i_n})\right)} _{c_1c_2...c_{k_1+k_2+...+k_m+k}}&\ge\inf\Delta^{\left(2, (\theta_{k_n, i_n})\right)} _{c_1c_2...c_{k_1+k_2+...+k_m}}\\
\sup\Delta^{\left(2, (\theta_{k_n, i_n})\right)} _{c_1c_2...c_{k_1+k_2+...+k_m+k}}&\le\sup\Delta^{\left(2, (\theta_{k_n, i_n})\right)} _{c_1c_2...c_{k_1+k_2+...+k_m}}\\
\end{aligned}
\right.
$$
are satisfied, indeed we have
$$
\inf\Delta^{\left(2, (\theta_{k_n, i_n})\right)} _{c_1c_2...c_{k_1+k_2+...+k_m+k}}-\inf\Delta^{\left(2, (\theta_{k_n, i_n})\right)} _{c_1c_2...c_{k_1+k_2+...+k_m}}=\Delta^{\left(2, (\theta_{k_n, i_n})\right)} _{c_1c_2...c_{k_1+k_2+...+k_m+k}(0)}-\Delta^{\left(2, (\theta_{k_n, i_n})\right)} _{c_1c_2...c_{k_1+k_2+...+k_m}(0)}\ge 0
$$
and
$$
\sup\Delta^{\left(2, (\theta_{k_n, i_n})\right)} _{c_1c_2...c_{k_1+k_2+...+k_m}}-\sup\Delta^{\left(2, (\theta_{k_n, i_n})\right)} _{c_1c_2...c_{k_1+k_2+...+k_m+k}}=\Delta^{\left(2, (\theta_{k_n, i_n})\right)} _{c_1c_2...c_{k_1+k_2+...+k_m}(1)}-\Delta^{\left(2, (\theta_{k_n, i_n})\right)} _{c_1c_2...c_{k_1+k_2+...+k_m+k}(1)}\ge 0.
$$

\item As a consequence of the previous properties we have the following chain of inclusions
$$
\Delta^{\left(2, (\theta_{k_n, i_n})\right)} _{c_1c_2...c_{k_1}}\subset \Delta^{\left(2, (\theta_{k_n, i_n})\right)} _{c_1c_2...c_{k_1+k_2}}\subset \ldots \subset \Delta^{\left(2, (\theta_{k_n, i_n})\right)} _{c_1c_2...c_{k_1+k_2+...+k_n}}\subset \ldots ,
$$
therefore from Cantor's intersection theorem we obtain
$$
\bigcap^{\infty} _{n=1}{\Delta^{\left(2, (\theta_{k_n, i_n})\right)} _{c_1c_2...c_{k_1+k_2+...+k_n}}}=x=\Delta^{\left(2, (\theta_{k_n, i_n})\right)} _{c_1c_2...c_n...}\in [0,1].
$$

\item This property follows from the definition of $f_{2^{'}}$.
\end{enumerate}
\end{proof}
\begin{remark}
Let us remark that properties of the pseudo-s-adic numeral system can be described in terms of the $s$-adic numeral system or in terms of such modified numeral systems. Formulations of certain properties depend on such considerations. In particular, number representations  having two different representations, as well as  situating cylinders of rank $n$ are these properties. We can see these properties by the definition of the relationship between the $s$-adic and  pseudo-$s$-adic representation.
 Now we give an example for the case of the nega-binary representation. In this article, properties of the modified $s$-adic representations are more  investigated in terms of the $s$-adic (the binary or  ternary) representations. 
\end{remark}

\begin{example}
The nega-binary representation of $x \in \left[-\frac{2}{3},\frac{1}{3}\right]$  is defined as 
$$
x=\Delta^{-2} _{\beta_1\beta_2...\beta_n...}\equiv -\frac{\beta_1}{2}+\frac{\beta_2}{2^2}-\frac{\beta_3}{2^3}+\dots+\frac{\beta_n}{(-2)^n}+\dots, ~~~~~\beta_n\in\{0,1\}.
$$
For this representation the following equality holds
$$
\Delta^{-2} _{\beta_1\beta_2...\beta_n...}+\Delta^2 _{(10)}=\Delta^2 _{[1-\beta_1]\beta_2...[1-\beta_{2n-1}]\beta_{2n}...}\equiv\Delta^2 _{\alpha_1\alpha_2...\alpha_n...} \in[0,1].
$$
In terms of the binary representation, 
$$
\Delta^2 _{\alpha_1\alpha_2...\alpha_{n-1}\alpha_n(0)}=\Delta^2 _{\alpha_1\alpha_2...\alpha_{n-1}[\alpha_n-1](1)},
$$
where $\alpha_n\ne 0$ and $\alpha_n=1$ (since $\alpha_n\in\{0,1\}$). But in terms of the nega-binary representation
$$
\beta_n=\begin{cases}
1-\alpha_n&\text{if $n$ is odd}\\
\alpha_n,&\text{if $n$ is even}.
\end{cases}
$$
Whence
$$
\Delta^{-2} _{\beta_1\beta_2...\beta_{n-1}\beta_n(10)}=\Delta^{-2} _{\beta_1\beta_2...\beta_{n-1}[1-\beta_n](01)}, ~~~\beta_n=1.
$$

Similarly, cylinders $\Delta^2 _{b_1b_2...b_n}$ are left-to-right situated for any $n\in\mathbb N.$ But cylinders $\Delta^{-2} _{[1-b_1]b_2...[1-b_{2n-1}]b_{2n}}$ are  left-to-right situated and cylinders $\Delta^{-2} _{[1-b_1]b_2...[1-b_{2n-1}]}$ are right-to-left situated. 

In addition, the map $f_{2^{'}}$ can be defined by the following table 
\begin{center}
\begin{tabular}{|c|c|c|c|c|c|}
\hline
$\alpha_{2m+1}\alpha_{2(m+1)}$ & $ 00 $& $01$ & $10$ & $11$\\
\hline
$\beta_{2m+1}\beta_{2(m+1)}$ & $10$ & $11$ & $00$ & $01$\\
\hline
\end{tabular}
\end{center}
\end{example}

Let us return to the consideration of  the map \eqref{eq: 11}.

\begin{theorem}
A map $f_{2^{'}}$ has the following properties (characteristics):
\begin{enumerate}
\item Suppose $[a,b]$ is a closed interval. Then the set $f_{2^{'}}\left([a,b]\right)$ is a closed interval or a union of closed intervals with a finite set of isolated points, or a union of closed intervals.
\item A map $f_{2^{'}}$ preserves the Lebesgue measure of intervals.
\item A map $f_{2^{'}}$, except for the cases when $f_{2^{'}}(x)=x$ or $ f_{2^{'}}(x)=1-x$,  does not preserve distance between points on $[0,1]$.
\end{enumerate}
\end{theorem}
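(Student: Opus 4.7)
The plan is to prove the three items separately, relying throughout on the cylinder calculus of the preceding lemma and the continuity/discontinuity dichotomy already established for $f_{2^{'}}$.

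For the first item, I would decompose an arbitrary closed interval $[a,b]\subset[0,1]$ into an at most countable disjoint union of maximal binary cylinders (plus possibly the two endpoints), in the style of the $\varepsilon$-covering argument that appears in the measure-preservation proof for the pseudo-ternary map. Each such cylinder is sent by $f_{2^{'}}$ to a pseudo-binary cylinder of the same length, which is itself a closed interval. Two adjacent binary cylinders inside $[a,b]$ produce either adjacent pseudo-binary cylinders that merge into a single longer closed interval (if the corresponding one-sided limits of $f_{2^{'}}$ agree at the shared boundary point) or a disjoint pair leaving a jump gap; the two endpoints of $[a,b]$, if binary rational, may contribute isolated image points, exactly as the worked example $f\bigl(\bigl[\tfrac{2}{27},\tfrac{4}{27}\bigr]\bigr)$ illustrates in the ternary case. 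Thus $f_{2^{'}}([a,b])$ is a closed interval, a union of closed intervals, or such a union together with a finite set of isolated points, which is the required trichotomy.

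For the second item I would copy, with pseudo-binary cylinders replacing pseudo-ternary ones, the proof of Lebesgue-measure preservation already given for the pseudo-ternary map. Property (2) of the previous lemma already gives $\lambda(C)=\lambda(f_{2^{'}}(C))$ whenever $C$ is a binary cylinder. For a general $[a,b]$ I would fix $\varepsilon>0$, construct two $\varepsilon$-coverings of $[a,b]$ by pairwise disjoint binary cylinders (one inscribed and one circumscribing), apply $f_{2^{'}}$ term-by-term, use countable additivity of $\lambda$ on the resulting disjoint unions of pseudo-binary cylinders, and let $\varepsilon\to0$; the countable set of shared boundary points contributes nothing to the measure.

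For the third item I would argue by contrapositive: if $f_{2^{'}}$ is distance preserving, I claim it must coincide with $x\mapsto x$ or $x\mapsto 1-x$. Otherwise the defining sequence contains at least one operator $\theta_{k_n,i_n}$ different from both $\theta_{k_n,0}$ and $\theta_{k_n,(2^{k_n})!-1}$, so by the continuity lemma there is a binary rational $x_0$ at which the two one-sided limits of $f_{2^{'}}$ differ by some positive jump $v$. Choosing binary-irrational points $x_1<x_0<x_2$ inside a sufficiently deep binary cylinder around $x_0$, I get $|x_2-x_1|$ arbitrarily small while $|f_{2^{'}}(x_2)-f_{2^{'}}(x_1)|\ge v>0$, in the spirit of the earlier pair $x_1=\Delta^3_{1(0)}$, $x_2=\Delta^3_{11(0)}$ used for the ternary map. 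Conversely, for the identity and the reflection the isometry property is immediate from $|(1-x_2)-(1-x_1)|=|x_2-x_1|$.

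The main obstacle will be the first item, since one has to control precisely what kind of set $f_{2^{'}}([a,b])$ is when the interior of $[a,b]$ contains infinitely many binary rationals that happen to be jump-discontinuity points. I would argue that each such interior jump is internal to some strictly larger cylinder whose image is already one closed interval, so interior discontinuities cannot produce isolated points; only the two boundary points of $[a,b]$ can, which is what keeps the alternative "with a finite set of isolated points" finite as stated.
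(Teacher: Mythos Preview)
Your plan follows the paper's own argument closely: both rely on the cylinder lemma to handle the case when $[a,b]$ is a cylinder, then pass to general intervals by inner and outer $\varepsilon$-coverings for the first two items, and both derive the failure of distance preservation from the existence of jump discontinuities. Your discussion of why interior jumps cannot create isolated points (only the two endpoints of $[a,b]$ can) is in fact more explicit than what the paper writes.

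There is one logical slip in your treatment of the third item. You take the contrapositive to be ``the defining sequence contains at least one operator $\theta_{k_n,i_n}$ different from both $\theta_{k_n,0}$ and $\theta_{k_n,(2^{k_n})!-1}$,'' but that is not the full negation of ``$f_{2'}$ is the identity or the reflection.'' The sequence could consist entirely of operators $\theta_{k_n,0}$ and $\theta_{k_n,(2^{k_n})!-1}$ yet not be constant, and such a mix still yields a map that is neither $x$ nor $1-x$. You must also produce a jump in this mixed case (it occurs at the boundary between a block governed by $\theta_{k_n,0}$ and one governed by $\theta_{k_{n+1},(2^{k_{n+1}})!-1}$), or else phrase the argument as the paper does: two adjacent cylinders of every rank map to adjacent cylinders if and only if the sequence is \emph{constantly} $\theta_{k_n,0}$ or \emph{constantly} $\theta_{k_n,(2^{k_n})!-1}$. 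Once this case is covered, your jump argument goes through.
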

\begin{proof} Let us prove the first and the second statements. 

Let $[a,b]$ be a certain closed interval. Then from the last lemma it follows that $f_{2^{'}}\left([a,b]\right)$ is a closed interval whenever $[a,b]$ is a certain binary cylinder $ \Delta^2 _{b_1b_2...b_{k_1+...+k_n}}$.

Let us consider the case when  $[a,b]$ is not a binary cylinder. Then there exist  coverings $[a,b]$ by binary cylinders $ \Delta^2 _{b_1b_2...b_{k_1+...+k_l}}$, $\Delta^2 _{b_1b_2...b_{k_1+...+k_m}}$ such that
$$
[a,b]\subseteq\bigcup_{l}{\Delta^2 _{b_1b_2...b_{k_1+...+k_l}}},
$$
$$
[a,b]\supseteq\bigcup_{m}{\Delta^2 _{b_1b_2...b_{k_1+...+k_m}}},
$$
and
$$
\lim_{l\to\infty}{\left(\bigcup_{l}{\Delta^2 _{b_1b_2...b_{k_1+...+k_l}}}\right)}=\lim_{m\to\infty}{\left(\bigcup_{m}{\Delta^2 _{b_1b_2...b_{k_1+...+k_m}}}\right)}=\lambda\left([a,b]\right).
$$

Since $f_{2^{'}}\left(\Delta^2 _{b_1b_2...b_{k_1+...+k_n}}\right)$ is a closed interval, $\lambda\left(f_{2^{'}}\left(\Delta^2 _{b_1b_2...b_{k_1+...+k_n}}\right)\right)=\lambda\left(\Delta^2 _{b_1b_2...b_{k_1+...+k_n}}\right)$, and an arbitrary binary rational point can be a jump discontinuity of $f_{2^{'}}$, we obtain that the first and the second  properties are true.

To prove the third statement, let us consider mapping two adjacent cylinders of rank $k_1+k_2+\dots+k_n$ under the action of $f_{2^{'}}$. If such two adjacent cylinders map to non-adjacent cylinders under $f_{2^{'}}$, then one can assume that a distance between points of these cylinders is not preserved  under the map $f_{2^{'}}$. 

It is easy to see that all pairwise adjacent cylinders are pairwise adjacent under $f_{2^{'}}$ whenever the condition $(\theta_{k_n,i_n})=\theta_{k_n,0}$ holds for all $n\in\mathbb N$ or $(\theta_{k_n,i_n})=\theta_{k_n,(2^{k_n})!-1}$ holds for all $n\in\mathbb N$. That is, when $f_{2^{'}}(x)=x$ or $f_{2^{'}}(x)=1-x$. In the other case, there exist at least one pair of adjacent cylinders such that their images under $f_{2^{'}}$ are not adjacent cylinders. That is, from the existence of jump discontinuities of $ f_{2^{'}}$ it follows that our statement is true.
\end{proof}

The next lemma follows from the definition of $ f_{2^{'}}$, the existence of jump discontinuities of $ f_{2^{'}}$, and from the placement of adjacent cylinders of the same rank. 
\begin{lemma}
A map $ f_{2^{'}}$:
\begin{itemize}
\item is strictly monotonic whenever $ f_{2^{'}}=x$ or $ f_{2^{'}}=1-x$; that is, when for all positive integers $n$  the following  conditions hold: $\theta_{k_n,i_n}=\theta_{k_n,0}$ or $\theta_{k_n,i_n}=\theta_{k_n,(2^{k_n})!-1}$.
\item has intervals of the monotonicity whenever almost all elements of $(\theta_{k_n,i_n})$ are equal only to $\theta_{k_n,0}$ or $\theta_{k_n,(2^{k_n})!-1}$.
\item is not monotonic in another case.
\end{itemize}
\end{lemma}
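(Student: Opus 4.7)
The plan is to prove each of the three bullets using the defining formula \eqref{eq: definition of pseudo-binary} for $f_{2^{'}}$, the cylinder-placement property from the preceding lemma, and the structure of jump discontinuities of $f_{2^{'}}$.

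For the first bullet I begin with the easy direction. If every $\theta_{k_n,i_n}=\theta_{k_n,0}$ then $\beta_j=\alpha_j$ for all $j$, giving $f_{2^{'}}(x)=x$; if every $\theta_{k_n,i_n}=\theta_{k_n,(2^{k_n})!-1}$ then $\beta_j=1-\alpha_j$, and summing $\sum_{j=1}^{\infty}2^{-j}=1$ gives $f_{2^{'}}(x)=1-x$, both strictly monotonic on $[0,1]$. The converse rests on the observation that among all $(2^{k_n})!$ permutations of $\{0,1\}^{k_n}$, the identity $\theta_{k_n,0}$ is the \emph{only} one preserving the lexicographic order and bit-wise complementation $\theta_{k_n,(2^{k_n})!-1}$ is the \emph{only} one reversing it (since lex order is reversed precisely by flipping every coordinate). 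Consequently, if some $\theta_{k_n,i_n}$ is neither of the two, then inside one cylinder of rank $k_1+\cdots+k_{n-1}$ one can find three consecutive sub-cylinders of rank $k_1+\cdots+k_n$ whose images under $f_{2^{'}}$ are not in monotonic order, so the induced jump discontinuities preclude global monotonicity. Even when every $\theta_{k_n,i_n}\in\{\theta_{k_n,0},\theta_{k_n,(2^{k_n})!-1}\}$, mixing the two types forces the orientations of a super-cylinder and its sub-cylinders to disagree at some level, which yields the same obstruction.

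For the second bullet, suppose from some level $N$ onwards every $\theta_{k_n,i_n}=\theta_{k_n,0}$; then inside any cylinder $C$ of rank $k_1+\cdots+k_N$ the map $f_{2^{'}}$ acts as identity at every subsequent level, is therefore a pure translation on the interval $C$, and so strictly increasing on $C$. The case of almost-all complementations is identical, with $f_{2^{'}}|_C$ a reflection of the form $x\mapsto a+b-x$. Each such cylinder is a (maximal) interval of monotonicity of $f_{2^{'}}$. The third bullet then covers the remaining case, in which either infinitely many $\theta_{k_n,i_n}$ are non-trivial permutations or both identity and complementation occur infinitely often among $(\theta_{k_n,i_n})$. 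In this situation, any subinterval $I\subseteq[0,1]$ of positive length contains pseudo-binary cylinders of arbitrarily high rank, and at a sufficiently high rank one encounters a level exhibiting the bad behaviour from the first paragraph, producing three points in $I$ whose $f_{2^{'}}$-images fail to be ordered monotonically.

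The main obstacle I anticipate is in the third bullet, specifically verifying that a ``bad'' operator level always acts inside a cylinder contained in the given subinterval $I$. The argument will require combining the diameter bound $\lambda\bigl(\Delta^{\left(2,(\theta_{k_n,i_n})\right)}_{c_1 c_2\cdots c_{k_1+k_2+\cdots+k_n}}\bigr)=2^{-(k_1+\cdots+k_n)}\to 0$ from the previous lemma with the infinitely-often hypothesis on the sequence $(\theta_{k_n,i_n})$, choosing nested cylinders inside $I$ so that the relevant non-trivial operator (or the identity/complementation switch) acts at a level whose cylinders still lie within $I$.
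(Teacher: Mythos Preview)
Your proposal is correct and follows essentially the same approach the paper indicates: the paper does not give a detailed proof but simply states that the lemma ``follow[s] from the definition of $f_{2^{'}}$, the existence of jump discontinuities of $f_{2^{'}}$, and from the placement of adjacent cylinders of the same rank,'' which are exactly the ingredients you invoke. Your write-up is considerably more detailed than what the paper supplies, and the orientation/lexicographic argument you give for the converse in the first bullet, together with the translation/reflection description on tail cylinders for the second, makes explicit what the paper leaves implicit.
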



Let us consider the random variable $\eta$ defined as
$$
\eta= \Delta^{\left(2, (\theta_{k_n, i_n})\right)} _{\xi_1\xi_2...\xi_n...},
$$
where digits $\xi_n$ are random and taking the values $0,1$ with
probabilities $p_0,p_1$. Here $p_0+p_1=1$. That is, $\xi_n$ are independent, and $P\{\xi_n=i_n\}=p_{i_n}$, where $i_n\in\{0,1\}$. Then the distribution function $F_{\eta}$ of the random variable $\eta$ is
$$
{F}_{\eta}(x)
=a_{\beta_1(x)}+\sum\limits^{\infty} _{n=2}
{\left({a}_{\beta_n(x)} \prod\limits^{n-1} _{j=1} {{p}_{\beta_j(x)}}\right)},
$$
where $x=\Delta^{\left(2, (\theta_{k_n, i_n})\right)} _{\beta_1\beta_2...\beta_n...}\in [0,1]$. 

Note that one can consider the case when $P\{\xi_n=i_n\}=p_{i_n,n }$, where for any $n\in\mathbb N$ $p_{0,n}+p_{1,n}=1$. Then 

$$
{F}_{\eta}(x)
=a_{\beta_1(x), 1}+\sum\limits^{\infty} _{n=2}
{\left({a}_{\beta_n(x),n} \prod\limits^{n-1} _{j=1} {{p}_{\beta_j(x),j}}\right)},
$$
where $0 \le x\ge 1$.

Note also that a function of the form
$$
f_D(x)=a_{\beta_1(x)}+\sum\limits^{\infty} _{n=2}
{\left({a}_{\beta_n(x)} \prod\limits^{n-1} _{j=1} {{p}_{\beta_j(x)}}\right)},
$$
where $x=\Delta^2 _{\alpha_1\alpha_2...\alpha_n...}$ and 
$$
 (\beta_{k_1+...+k_{n-1}+1}, \dots ,\beta_{k_1+\dots+k_n})=\theta_{k_n,i_n} (\alpha_{k_1+...+k_{n-1}+1}, \dots ,\alpha_{k_1+...+k_n}),
$$
 $n=1,2,\dots ,$  and $k_0=0$, is a generalization of the Salem function~(\cite{Salem}; the Salem function is one of the simplest examples of singular functions. In addition,
$$
f_D=F_{\eta}\circ f_{2^{'}}.
$$

Now let us consider  integral properties of $f_{2^{'}}$. 

\begin{theorem}
The Lebesgue integral of the function $f_{2^{'}}$   is equal to $\frac{1}{2}$.
\end{theorem}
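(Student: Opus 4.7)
The plan is to reduce the computation of $\int_0^1 f_{2'}(x)\,dx$ to a sum of expectations of individual digits and then exploit the bijectivity of each operator $\theta_{k_n,i_n}$. For Lebesgue-a.e.\ $x\in[0,1]$ the binary expansion $x=\Delta^2_{\alpha_1\alpha_2\dots}$ is unique, and by the definition of $f_{2'}$ we may write
$$f_{2'}(x)=\sum_{m=1}^{\infty}\frac{\beta_m(x)}{2^m},$$
where each $\beta_m(x)$ is a $\{0,1\}$-valued function depending only on the block of binary digits of $x$ that contains position $m$. Since all summands are non-negative, Tonelli's theorem gives
$$\int_0^1 f_{2'}(x)\,dx=\sum_{m=1}^{\infty}\frac{1}{2^m}\int_0^1 \beta_m(x)\,dx,$$
and it therefore suffices to prove $\int_0^1 \beta_m(x)\,dx=\tfrac{1}{2}$ for every $m$.

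Fix $m$, and let $n$ be the unique index with $K_{n-1}<m\le K_n$, where $K_n=k_1+\dots+k_n$ and $K_0=0$. Then $\beta_m(x)$ is the $(m-K_{n-1})$-th coordinate of $\theta_{k_n,i_n}(\alpha_{K_{n-1}+1},\dots,\alpha_{K_n})$, a function of $k_n$ binary digits of $x$. I would partition $[0,1]$ into the $2^{K_n}$ binary cylinders of rank $K_n$, each of Lebesgue measure $2^{-K_n}$, on each of which $\beta_m$ is constant. Summing over these cylinders, and absorbing the factor $2^{K_{n-1}}$ coming from the free digits outside the $n$-th block, $\int_0^1\beta_m(x)\,dx$ reduces to
$$\frac{1}{2^{k_n}}\sum_{(a_1,\dots,a_{k_n})\in\{0,1\}^{k_n}}\bigl[\theta_{k_n,i_n}(a_1,\dots,a_{k_n})\bigr]_{m-K_{n-1}}.$$
Because $\theta_{k_n,i_n}$ is a bijection of $\{0,1\}^{k_n}$ onto itself, its image tuples $(\beta_1,\dots,\beta_{k_n})$ run through $\{0,1\}^{k_n}$ exactly once, so the fixed coordinate equals $1$ in precisely $2^{k_n-1}$ of the $2^{k_n}$ summands. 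The sum equals $2^{k_n-1}$, and we obtain $\int_0^1\beta_m(x)\,dx=\tfrac{1}{2}$.

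Substituting back gives $\int_0^1 f_{2'}(x)\,dx=\sum_{m\ge 1}2^{-m}\cdot\tfrac{1}{2}=\tfrac{1}{2}$, as claimed. The minor technicalities — the countable set of binary-rational points where the expansion is not unique (of measure zero, hence irrelevant to Lebesgue integration), and the convention stated just before the theorem that excludes the period $(1)$ for the well-posedness of $f_{2'}$ — have no bearing on the computation. I do not anticipate a genuine obstacle: the block-by-block structure is built into the definition of $f_{2'}$, and the combinatorial input we actually use is just that a bijection on $\{0,1\}^{k_n}$ sends the uniform distribution to itself, which makes each output digit Bernoulli$(1/2)$ on average.
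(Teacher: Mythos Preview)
Your proof is correct. The paper takes a different route: it approximates $f_{2'}$ from below by simple functions constant on the binary cylinders of rank $K_n=k_1+\dots+k_n$, obtaining the lower sum
\[
\sum_{c\in\{0,1\}^{K_n}}\frac{1}{2^{K_n}}\,\Delta^{(2,(\theta_{k_n,i_n}))}_{c_1\dots c_{K_n}(0)}
=\frac{2^{K_n}-1}{2\cdot 2^{K_n}},
\]
and lets $n\to\infty$. In both arguments the only substantive input is that each $\theta_{k_n,i_n}$ is a bijection of $\{0,1\}^{k_n}$, so that summing any fixed output coordinate over all inputs gives $2^{k_n-1}$; you invoke this digit by digit after Tonelli, while the paper invokes it implicitly when summing the cylinder infima. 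Your decomposition is somewhat cleaner in that it avoids the limit and makes the ``each output bit is Bernoulli$(1/2)$ under Lebesgue measure'' interpretation explicit, whereas the paper's approach ties in more directly with the cylinder machinery it has already built.
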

\begin{proof}
Since $0\le f_{2^{'}}(x)\le1$, we choose 
$$
E_n=\{x: y_{n-1}\le f_{2^{'}}(x)<y_n\}=\Delta^2 _{b_1b_2...b_{k_1+k_2+...+k_n}},
$$
where
$$
|E_n|=\frac{1}{2^{k_1+k_2+\dots+k_n}}
$$
and
$$
T=\left\lbrace
0, \Delta^{\left(2, (\theta_{k_n, i_n})\right)} _{c_1c_2...c_{k_1}(0)}, \Delta^{\left(2, (\theta_{k_n, i_n})\right)} _{c_1c_2...c_{k_1+k_2}(0)}, \dots \right\rbrace.
$$

Clearly, 
$ y_{n-1}\in[y_{n-1},y_n)$ and the conditions $\lambda(E_n)\to 0$ and $n\to\infty$ are equivalent. Then
$$
I=\lim_{n\to\infty}{\left(\sum_{\text{}~ c_i\in\{0,1\}}{\frac{\Delta^{\left(2, (\theta_{k_n, i_n})\right)} _{c_1c_2...c_{k_1+k_2+...+k_n}(0)}}{2^{k_1+...+k_n}}}\right)}=\lim_{n\to\infty}{\frac{2^{k_1+...+k_n}-1}{2\cdot 2^{k_1+...+k_n}}}=\frac 1 2,
$$
$i=1,2, \dots , k_1+k_2+\dots +k_n$.
\end{proof}

\section{Further investigations and open problems}
\label{section 4}

This section is devoted to introducing open problems  and further investigations which the author of this paper is focusing on for future developments in his own research.

This article is an initial article in the series of papers of the author of the present article devoted to the investigation of the operator approach for modeling, studying, and applying different new numeral systems (positive and alternating expansions of real numbers with a finite,  infinite,  or variable alphabet, etc.).

One can note that the approach  described in the present article for modeling numeral systems can be used for the case  of the $s$-adic numeral system with a fractional base $s>1$ (such representation of real numbers was introduced in \cite{Renyi1957}) or for the case  of the nega-$s$-adic numeral system with a fractional base $(-s)<-1$ (this representation of real numbers is introduced in \cite{IS2009}). Also, the approach can be applied to numeral systems with a negative (integer or fractional) base and/or with a variable alphabet (we have a variable alphabet for a certain   representation $\Delta_{\gamma_1\gamma_2...\gamma_n...}$, where $\gamma_n\in A_n$,  when there exists a finite or infinite number of pairs $l\ne m$ such that $|A_l|\ne|A_m|$). It is easy to see that a basis of a numeral system with a variable alphabet is a sequence or a matrix. 

In the next articles of the author of the present article, the approach introduced in Section~\ref{section 2a} will be used and investigated for the cases of all these representations of real numbers. 

Note that examples of representations with a variable alphabet are representations of real numbers by positive (\cite{Cantor1}) or alternating (\cite{Serbenyuk2017, Symon2017}) Cantor series, the nega-$\tilde Q$- (\cite{Serbenyuk2016}) or $\tilde Q^{'} _{\mathbb N_B}$-representation~(\cite{S.Serbenyuk 2018}), etc.

Certain problems are also  related to Lemma~\ref{lm: lemma 2-problems}. Let us consider these
problems introducing them in the present article.  

In \cite{Renyi1957}, the $f$-expansion of real numbers was considered:
$$
x=f(\varepsilon_1+f(\varepsilon_2+f(\varepsilon_3+\dots)))\equiv\Delta^f _{\varepsilon_1\varepsilon_2...\varepsilon_n...},
$$
where $f$ is a fixed function having certain properties. Notice that it is proved that such function is increasing. A number of researches devoted to different cases of the $f$-expansion (for example, see \cite{{Renyi1957}, {Schweiger(184)2018}, {Schweiger2016}}, etc.). 

For example, if $f(x)=\frac{x}{\beta}$, where $\beta>1$ is a fixed real number, then 
$$
x=\sum^{\infty} _{n=1}{\frac{\varepsilon_n}{\beta^n}}\equiv\Delta^{\beta} _{\varepsilon_1\varepsilon_2...\varepsilon_n...}. 
$$
The last-mentioned expansion (see \cite{Renyi1957}) of real numbers is called \emph{the $\beta$-expansion}. 

If $f(x)=-\frac{x}{\beta}$, where $(-\beta)<-1$, then
$$
x=\sum^{\infty} _{n=1}{\frac{\varepsilon_n}{(-\beta)^n}}\equiv\Delta^{-\beta} _{\varepsilon_1\varepsilon_2...\varepsilon_n...}. 
$$
The last-mentioned expansion of real numbers is called \emph{the $(-\beta)$-expansion} and  was introduced in \cite{{IS2009}}.

\begin{problem}
Let us note that we can obtain our case of the pseudo-ternary representation by the following way.  Suppose that
$f(\alpha_n)=\frac{\theta(\alpha_n)}{s}$, where $s\ge 2$ is a natural
number and $\theta=\theta_{1,1}$. Then the obtained $f$-expansion is following:
$$
\Delta^{3} _{\theta(\alpha_1)\theta(\alpha_2)...\theta(\alpha_n)...}=\Delta^{3^{'}} _{\beta_1\beta_2...\beta_n...}.
$$
\begin{conjecture}
One can model   the $f$-expansion such that all points except points from no more countable set (i.e., we deal with a set of the full Lebesgue measure) have the unique representation when $f$ is  is determined at any point and continuous almostwhere (having jump discontinuities) on the domain. Also, $f$ can be non-monotone or non-differentiable.
\end{conjecture}

In addition, a general technique representing the preudo-$s$-adic representation by the $f$-expansion is unknown. 

Let us note that we can write the pseudo-ternary representation in the following
way:
$$
\Delta^{3^{'}} _{\beta_1\beta_2...\beta_n...}=\Delta^{f(x)} _{\beta_1\beta_2...\beta_n...}=f\left(\Delta^3 _{\alpha_1\alpha_2...\alpha_n...}\right),
$$
where $x=\Delta^3 _{\alpha_1\alpha_2...\alpha_n...}$, $f$ is defined by equality \eqref{eq: function f},  and
$$
\Delta^{f(x)} _{\beta_1\beta_2...\beta_n...}=f\left(\Delta^{3} _{\theta(\alpha_1)\theta(\alpha_2)...\theta(\alpha_n)...}\right).
$$
\end{problem}

\begin{problem}
Let $(f_n)$ be a fixed sequence of  certain functions. One can define the $(f_n)$-expansion of real numbers by the following way:
$$
x=f_1(\varepsilon_1+f_2(\varepsilon_2+f_3(\varepsilon_3+\dots)))\equiv\Delta^{(f_n)} _{\varepsilon_1\varepsilon_2...\varepsilon_n...}.
$$
This representation is new and is introduced in this paper. However there are several examples of known representations obtained from the $(f_n)$-expansion.  
\end{problem}
\begin{example}
If $f_n(x)=\frac{x}{q_n}$, where $(q_n)$ is a fixed sequence of positive integers and $q_n>1$ for any $n\in\mathbb N$, then we obtain the representation of real numbers by positive Cantor series \cite{Cantor1}.
\end{example}
\begin{problem}
Suppose that $f_n(x)=\frac{x}{q_n}$, where $(q_n)$ is a fixed sequence of numbers for which one of the following conditions holds: $q_n>1$ or $q_n\ge 2$.  Then we obtain the encoding of real numbers by positive Cantor series (with a fractional base). Such positive and alternating expansions of real numbers are new and introduced in this article.  Note that we can define such representations by the condition $\varepsilon_n\in\{0,1,\dots , [q_n]\}$, $\varepsilon_n\in\{0,1,\dots , [q_n-1]\}$,  or $\varepsilon_n\in\{0,1,\dots , [q_n]-1\}$. Here $[a]$ is the integer part of $a$. The cases of these conditions are interesting for future investigations. 

Note that it is easy to see that one can model the known  $Q_s$-, $Q^{*} _s$-, $\tilde Q$- and other  representations by analogy with this case.
\end{problem}

\begin{example}[{\bfseries Quasi-nega-representations}] 
Let $\beta>1$ be a fixed real number and $(q_n)$ be a fixed sequence of positive integers such that $q_n\ge 2$.

Let us have 
$$
f(x)=\frac{x}{\beta(-1)^{\rho_n+\rho_{n-1}}}
$$ 
 and a fixed sequence $(f_n)$ of functions 
$$
f_n(x)=\frac{x}{q_n(-1)^{\rho_{n-1}+\rho_n}},
$$ 
 where
$$
\rho_n=\begin{cases}
1&\text{if $n\in \mathbb N_B$}\\
2&\text{if  $n\notin \mathbb N_B$,}
\end{cases}
$$
$\mathbb N_B$ is a fixed subset of positive integers, and $\rho_0=0$. Then we get the following two quasi-nega-expansions with a fractional base in the general case:
$$
x=\sum^{\infty} _{n=1}{\frac{\varepsilon_n(-1)^{\rho_n}}{\beta^{n}}},
$$
$$
x=\sum^{\infty} _{n=1}{\frac{(-1)^{\rho_n}\varepsilon_n}{q_1q_2\cdots q_n}}.
$$

Note that we can model the quasi-nega-$\tilde Q$-expansion by analogy. 
\end{example}

\begin{conjecture}
Properties of $f$ and of $(f_n)$  that are sufficient for modeling the $f$- and $(f_n)$-representations coincide.
\end{conjecture}

Considering  Section~\ref{section 3a}, one can note the  the following conjecture.
\begin{conjecture}
The function $F_{\eta}$ is singular.
\end{conjecture}

These mentioned  problems and new representations of real numbers, as well as  $F_{\eta}, f_D$, and $f_{2^{'}}$ described in Section~\ref{section 3a} 
and  their generalizations will be considered and investigated in the next papers of the author of the present article.

\end{document}